\definecolor{darkred}{RGB}{139,0,0}
\definecolor{darkgreen}{RGB}{0,100,0}
\definecolor{darkmagenta}{RGB}{139,0,139}
\definecolor{darkpurple}{RGB}{110,0,180}
\definecolor{darkblue}{RGB}{40,0,200}
\definecolor{darkorange}{RGB}{255,140,0}
\newcommand{\bsa}{\boldsymbol{a}}
\newcommand{\bsb}{\boldsymbol{b}}
\newcommand{\bsgamma}{\boldsymbol{\gamma}}
\newcommand{\bsx}{\boldsymbol{x}}
\newcommand{\bsg}{\boldsymbol{g}}
\newcommand{\bszero}{\boldsymbol{0}}
\newcommand{\bsvarphi}{\boldsymbol{\varphi}}
\newcommand{\bsj}{\boldsymbol{j}}
\newcommand{\bsm}{\boldsymbol{m}}
\newcommand{\bsk}{\boldsymbol{k}}
\newcommand{\bsone}{\boldsymbol{1}}
\newcommand{\bsy}{\boldsymbol{y}}
\newcommand{\bsell}{\boldsymbol{\ell}}
\newcommand{\rd}{\,{\rm d}}
\newcommand{\ld}{{\rm ld}\,}
\newcommand{\RR}{\mathbb{R}}
\newcommand{\NN}{\mathbb{N}}
\newcommand{\ZZ}{\mathbb{Z}}
\newcommand{\FF}{\mathbb{F}}
\newcommand{\cP}{\mathcal{P}}
\newcommand{\cQ}{\mathcal{Q}}
\newcommand{\uu}{\mathfrak{u}}
\newcommand{\vv}{\mathfrak{v}}
\newcommand{\wal}{\mathrm{wal}}
\newtheorem{definition}{Definition}
\newtheorem{theorem}{Theorem}
\newtheorem{corollary}{Corollary}
\newtheorem{remark}{Remark}
\newtheorem{lemma}{Lemma}
\newtheorem{algorithm}{Algorithm}
\begin{document}

\title{Weighted integration over a cube based on digital nets and sequences}

\author{Josef Dick\thanks{J. Dick was supported by the ARC Discovery Project DP190101197 and the Special Research Program “Quasi-Monte Carlo Methods: Theory and Applications” funded by the Austrian Science Fund (FWF) Project F55-N26.}\,\, and Friedrich Pillichshammer\thanks{F.~Pillichshammer
is supported by the Austrian Science Fund (FWF),
Project F5509-N26, which is part of the Special Research Program ``Quasi-Monte Carlo Methods: Theory and Applications''.}}

\maketitle

\begin{abstract}
Quasi-Monte Carlo (QMC) methods are equal weight quadrature rules to approximate integrals over the unit cube with respect to the uniform measure. In this paper we discuss QMC integration with respect to general product measures defined on an arbitrary cube. We only require that the cumulative distribution function is invertible. We develop a worst-case error bound and study the dependence of the error on the number of points and the dimension for digital nets and sequences as well as polynomial lattice point sets, which are mapped to the domain using the inverse cumulative distribution function. We do not require any smoothness properties of the probability density function and the worst-case error does not depend on the particular choice of density function and its smoothness. The component-by-component construction of polynomial lattice rules is based on a criterion which depends only on the size of the cube but is otherwise independent of the product measure.
\end{abstract}

\section{QMC for weighted integration over the cube}\label{sec1}

We are interested in $\varphi$-weighted integration 
\begin{equation}\label{wintunit}
\int_{[\bsa, \bsb]} F(\bsx) \varphi(\bsx)\rd \bsx
\end{equation}
of functions $F$ over the cube $$[\bsa, \bsb] = [a_1, b_1] \times [a_2,b_2]\times \ldots \times [a_s,b_s],$$ where $\bsa = (a_1, \ldots, a_s)$, $ \bsb = (b_1, \ldots, b_s)$, $-\infty < a_i < b_i < \infty$, $\varphi(\bsx)=\prod_{i=1}^s \varphi_i(x_i)$ and, for $i \in [s]:=\{1,2,\ldots,s\}$,  $\varphi_i:[a_i, b_i] \rightarrow \RR$ denotes a probability density function (PDF) with respect to the Lebesgue measure, i.e., $$\forall x \in [a_i,b_i]:\ \varphi_i(x) \ge 0 \ \ \ \mbox{ and }\ \ \ \int_{a_i}^{b_i} \varphi_i(y) \,\mathrm{d} y = 1.$$ The corresponding cumulative distribution functions (CDF) are defined by $$\Phi_i(x) = \int_{a_i}^x \varphi_i(y) \,\mathrm{d} y.  $$ We assume that the $\Phi_i:[a_i,b_i]\rightarrow [0,1]$ are invertible and denote its inverse by $\Phi_i^{-1}:[0,1]\rightarrow [a_i,b_i]$. Let further $\Phi^{-1} = (\Phi^{-1}_1, \ldots, \Phi^{-1}_s)$ be defined as $$\Phi^{-1}(\bsx):=(\Phi^{-1}_1(x_1), \ldots, \Phi^{-1}_s(x_s))\ \ \ \mbox{for $\bsx=(x_1,\ldots,x_s)\in [0,1]^s$.}$$ Note that with $\Phi_i$ also $\Phi_i^{-1}$ is monotone and therefore also Borel-measurable.

Usually, quasi-Monte Carlo (QMC) rules are used for uniform integration over the unit-cube, i.e., when $a_i = 0$, $b_i = 1$ and $\varphi_i \equiv 1$ for all $i \in [s]$. In this context there exists a multitude of literature. For introductory texts and surveys we refer to \cite{DKS13,DP10,LP14,niesiam}. However, often the integration problem is not defined on the unit cube and/or with respect to the uniform measure. The standard approach in this case is to perform a transformation to standardize the problem to a setting where QMC can be applied directly. However, such a transformation can have a big influence on the integration problem and the performance of QMC methods. For instance, \cite{KDSWW08} applied QMC to a problem from statistics, \cite{KSS12} applied QMC to a partial differential equations with random coefficients, \cite{DAFS19} used a tensor approximation to transform QMC points to other distributions appearing in statistics. It is known from \cite{ABN18} that well distributed point sets exist with respect to very general measures, but it is difficult to obtain good explicit constructions (see for instance \cite{NK14} for a successful transformation of lattice point sets to $\mathbb{R}^s$). Here we restrict ourselves to product measures. The more general case of mixture distributions and beyond will be studied in the forthcoming paper \cite{DP2020b}.

\medskip

In this paper we aim at applying QMC for weighted integrals of the form \eqref{wintunit}. We will use QMC rules of the form 
\begin{equation}\label{QMCrule}
\frac{1}{N}\sum_{n=0}^{N-1}F(\Phi^{-1}(\bsy_n)),
\end{equation}
where $\bsy_0,\bsy_1,\ldots,\bsy_{N-1}$ are $N$ points from the $s$-dimensional unit-cube $[0,1]^s$. In particular we study digital nets and sequences as well as polynomial lattice point sets. See \cite{DP10,N87,niesiam} for introductions to these topics. 

Let $\bsgamma:=\{\gamma_\uu \in \RR_+ \ : \ \uu \subseteq [s]\}$ be a given set of positive so-called coordinate weights. These coordinate-weights model in some sense the importance of variables or groups of variables of integrands for the integration problem. This point of view has been introduced by Sloan and Wo\'{z}niakowski~\cite{SW98} in order to explain the effectiveness of QMC also in high dimensions. In fact, in \cite{SW98} the authors used a special type of weights which are so-called {\it product weights} of the form 
 \begin{equation}\label{prodW}
 \gamma_\uu=\prod_{i \in \uu} \gamma_i
 \end{equation}
with a weight sequence $(\gamma_i)_{i \ge 1}$ in $\RR_+$, the set of positive real numbers. 

Another type of weights that attracted a lot of attention during the past years are {\it product order dependend (POD) weights} which are of the form 
\begin{equation}\label{PODw}
\gamma_\uu= \Gamma_{|\uu|} \prod_{j \in \uu} \gamma_j,
\end{equation}
where $(\Gamma_{\ell})_{\ell \ge 0}$ and $(\gamma_j)_{j \ge 1}$ are sequences in $\RR_+$. POD weights are very important in the context of PDEs with random coefficients, see, e.g., \cite{DKLNS14,KSS11}.

\medskip

We consider functions with bounded mixed partial derivatives up to order one in the $\sup$-norm and define, for $p \in [1,\infty]$, the ``weighted'' norm
\begin{equation}\label{def:norm}
\| F \|_{p, s,\bsgamma}:= \left( \sum_{\uu \subseteq [s]} \left( \frac{1}{\gamma_\uu} \sup_{\bsx \in [\bsa, \bsb]} \left | \frac{\partial^{|\uu|} F}{\partial \bsx_\uu} (\bsx)  \right | \right)^p \right)^{1/p},
\end{equation}
with the obvious modifications if $p = \infty$. Here, for $\uu=\{u_1,u_2,\ldots,u_k\}$ with $1 \le u_1 < u_2 < \ldots < u_k \le s$, we write $$\frac{\partial^{|\uu|} F}{\partial \bsx_\uu}:= \frac{\partial^k F}{\partial x_{u_1} \partial x_{u_2} \ldots \partial x_{u_k}}.$$

The integration error for weighted integration using  a QMC-rule \eqref{QMCrule} with underlying set $\cP=\{\bsy_0,\bsy_1,\ldots,\bsy_{N-1}\}$ of integration nodes is defined by $${\rm err}(F; \varphi; \cP):=\frac{1}{N}\sum_{n=0}^{N-1}F(\Phi^{-1}(\bsy_n))- \int_{[\bsa, \bsb]} F(\bsx) \varphi(\bsx)\rd \bsx$$ and the  worst-case error is defined as $${\rm wce}(\cP; p,s,\bsgamma, \varphi):=\sup_{\| F \| _{p, s,\bsgamma}\le 1} |{\rm err}(F; \varphi; \cP)|.$$ In particular, for every $F$ with $\| F \| _{p, s,\bsgamma} < \infty$ we have $$|{\rm err}(F; \varphi; \cP)| \le \| F \| _{p, s,\bsgamma} \ {\rm wce}(\cP; p,s,\bsgamma, \varphi).$$
Notice that the norm of the function does not depend on the integration weight $\varphi$. In this paper we study the decay of the worst-case error ${\rm wce}(\cP; p,s,\bsgamma,\varphi)$ as the number of QMC points $N$ increases. Besides the convergence rate of the worst-case error for $N \to \infty$, we are also interested in the dependence of the integration problem on the dimension $s$. This question is related to tractability studies.

A somewhat surprising result of this paper is that the worst-case error can be bounded independent of $\varphi$ (see Theorem~\ref{thm1} below). The method itself requires that $\Phi^{-1}$ exists and the points $\Phi^{-1}(\bsy_n)$ can be generated, but the decay of the error does otherwise not depend on properties of $\varphi$ or $\Phi^{-1}$. This starkly differs from the straight forward approach were one considers the composition $F \circ \Phi^{-1}$ as the integrand, in which case one ends up with the function norm $\| F \circ \Phi^{-1} \|$ for some norm $\| \cdot \|$. In this case the smoothness properties of $\Phi^{-1}$ are important. Our proof uses Haar functions which are orthogonal with respect to the weight $\varphi$, and we then study the decay of the Haar coefficients with respect to these $\varphi$-orthogonal Haar functions.

\medskip

The paper is organized as follows: In Section~\ref{sec:HaarWalsh} we introduce the basic analytic tools which are Haar- and Walsh functions. The main result of this section is a bound on the Haar coefficients of functions $F$ with bounded norm  $\|F\|_{p, s,\bsgamma}$ (see Lemma~\ref{le:esthcoeff}). In Section~\ref{sec:errest} we present a bound on the integration error which is in many cases easy to handle (see Theorem~\ref{thm1}). This upper bound on the integration error will then be studied for digital nets and sequences as well as for polynomial lattice point sets. The definitions and some basics are recalled in Section~\ref{sec:dignetseqplps}. The error analysis for these node sets follows in Sections~\ref{sec:errdignet}-\ref{sec:PLPS}. The paper concludes with a discussion of the dependence of the worst-case error bounds on the dimension. We will provide sufficient conditions on the weights $\bsgamma$ which guarantee that the obtained error bounds hold uniformly in $s$; the technical term for this property is {\it strong polynomial tractability}; see Section~\ref{sec:SPT}.

\section{Haar and Walsh functions}\label{sec:HaarWalsh}

In the following we recall the definition of Haar functions, which form our basic analytic tool,  estimate the Haar coefficients of functions $F$ with $\|F\|_{p, s,\bsgamma}< \infty$ and show a representation of Haar functions in terms of Walsh functions. 

\paragraph{Haar functions.} Let $j \in \mathbb{N}_{-1} := \{-1, 0, 1, 2, 3, \ldots \}$, $D_{-1} := \{0\}$, and $D_{j} := \{0, 1, \ldots, 2^{j}-1\}$. For $j \in \mathbb{N}_{-1}$ and $m \in D_{j}$ we define the intervals
\begin{equation*}
I_{j, m} := \left[ \frac{m}{2^j}, \frac{m+1}{2^j} \right),
\end{equation*}
and for vectors $\bsj=(j_1,\ldots,j_s)\in \mathbb{N}_{-1}^s$, $\bsm=(m_1,\ldots,m_s)\in D_{\bsj}$, where  $D_{\bsj} := D_{j_1}\times \ldots \times D_{j_s}$, we set
\begin{equation*}
I_{\bsj, \bsm} := \prod_{i=1}^s I_{j_i, m_i}.
\end{equation*}

We define the univariate Haar functions $h_{j,m} :[0,1) \to \mathbb{R}$ by $h_{-1, 0}(x) = 1$, and, for $j>-1$ and $m \in D_j$, by 
\begin{equation*}
h_{j,m}(x) := \begin{cases} 2^{j/2} & \mbox{if } \frac{m}{2^j} \le x < \frac{2m+1}{2^{j+1}}, \\[0.5em] -2^{j/2} & \mbox{if } \frac{2m+1}{2^{j+1}} \le x < \frac{m+1}{2^j}, \\[0.5em] 0 & \mbox{otherwise}.    \end{cases}
\end{equation*}
For $\bsj \in \mathbb{N}_{-1}^s$, $\bsm \in D_{\bsj}$ and $\bsx=(x_1,\ldots,x_s)\in [0,1)^s$ we set $$h_{\bsj, \bsm}(\bsx) := \prod_{i=1}^s h_{j_i, m_i}(x_i).$$ These are the multivariate Haar functions.
These function satisfy the following orthogonality property
\begin{equation}\label{Haar_orthogonal}
\int_{[0,1]^s} h_{\bsj, \bsm}(\bsx) h_{\bsj', \bsm'}(\bsx) \,\mathrm{d} \bsx = \begin{cases} 1 & \mbox{if } (\bsj, \bsm) = (\bsj', \bsm'), \\ 0 & \mbox{otherwise} \end{cases}
\end{equation}

We also define Haar functions defined on $[\bsa, \bsb]$ which are orthogonal with respect to the weight $\varphi$
\begin{equation*}
h^{(\varphi_i)}_{j_i, m_i} (x) := h_{j_i, m_i}(\Phi_i(x)), \quad h^{(\bsvarphi)}_{\bsj, \bsm}(\bsx) := \prod_{i=1}^s h^{(\varphi_i)}_{j_i, m_i}(x_i),
\end{equation*}
where $\bsvarphi:=(\varphi_1,\ldots,\varphi_s)$. Then
\begin{equation*}
\int_{[\bsa, \bsb]} h^{(\varphi)}_{\bsj, \bsm}(\bsx) h^{(\varphi)}_{\bsj', \bsm'}(\bsx) \varphi(\bsx) \,\mathrm{d} \bsx = \begin{cases} 1 & \mbox{if } (\bsj, \bsm) = (\bsj', \bsm'), \\ 0 & \mbox{otherwise}, \end{cases}
\end{equation*}
which is equivalent to \eqref{Haar_orthogonal}.

Let $F: [\bsa, \bsb] \to \mathbb{R}$ with $\| F \| _{p,s,\bsgamma}< \infty$. We expand $F$ in its Haar series
\begin{equation}\label{f:HaarSeries}
F(\bsx) = \sum_{\bsj \in \mathbb{N}_{-1}^s} \sum_{\bsm \in D_{\bsj}} \widehat{F}^{(\bsvarphi)}(\bsj,\bsm) h^{(\bsvarphi)}_{\bsj, \bsm}(\bsx),
\end{equation}
where, due to the $L_2$ orthonormality of $h^{(\bsvarphi)}_{\bsj, \bsm}$, we have
\begin{equation*}
\widehat{F}^{(\bsvarphi)}(\bsj, \bsm) = \int_{[\bsa, \bsb]} F(\bsx) h^{(\bsvarphi)}_{\bsj, \bsm}(\bsx) \varphi(\bsx) \,\mathrm{d} \bsx.
\end{equation*}

In order to estimate the integration error we need to know the decay rate of $\widehat{F}^{(\bsvarphi)}(\bsj, \bsm)$. For a proper statement of this estimate we use the following notation: For $\uu \subseteq [s]$ and $\bsj\in \NN_{-1}^s$ set $\bsj_\uu := (j_i)_{i \in \uu}$ and let $(\bsj_\uu, - \bsone) \in \mathbb{N}_{-1}^s$ be the vector whose $i^{{\rm th}}$ component is $j_i$ if $i \in \uu$ and $-1$ otherwise. We use a similar notation for $(\bsm_\uu, \bszero)$.

\begin{lemma}\label{le:esthcoeff}
Let $F:[\bsa, \bsb] \rightarrow \RR$ with $\|F\|_{p, s,\bsgamma}< \infty$. For $\emptyset \not= \uu \subseteq [s]$, $\bsj_\uu \in \NN_0^{|\uu|}$ and $(\bsm_\uu,\bszero) \in D_{(\bsj_\uu,-\bsone)}$ we have $$|\widehat{F}^{(\bsvarphi)}((\bsj_\uu, -\bsone), (\bsm_\uu, \bszero)) | \le  \frac{1}{2^{ |\bsj_\uu| / 2 }} \left(\prod_{i \in \uu} \lambda_{j_i, m_i} \right) \sup_{\bsx \in [\bsa, \bsb]} \left| \frac{\partial^{|\uu|} F}{\partial \bsx_\uu} (\bsx) \right|, $$ where
\begin{equation}\label{deflambda}
\lambda_{j_i,m_i} = \frac{1}{2} \left[ \Phi^{-1}_i\left( \frac{m_i + 1}{2^{j_i}}\right) -  \Phi^{-1}_i\left(\frac{m_i}{2^{j_i}}\right) \right].
\end{equation}
\end{lemma}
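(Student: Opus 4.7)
The plan is to exploit the product structure of the integrand and apply integration by parts once in each coordinate $i \in \uu$. First, because the Haar index has $j_i=-1$ and $m_i=0$ for $i \notin \uu$, the corresponding factors $h^{(\varphi_i)}_{-1,0}(x_i)$ equal $1$, so those directions contribute only the PDFs $\varphi_i(x_i)$ to the integrand; these will integrate to $1$ once the supremum of the mixed partial derivative is pulled outside.

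For each $i \in \uu$ I introduce the antiderivative
\[
H_{j_i,m_i}(x) := \int_{a_i}^x h_{j_i,m_i}(\Phi_i(t))\,\varphi_i(t)\,\rd t, \qquad x \in [a_i,b_i].
\]
Two facts about $H_{j_i,m_i}$ drive the argument. First, substituting $u = \Phi_i(t)$ converts integrals of $h_{j_i,m_i}(\Phi_i(t))\varphi_i(t)$ with respect to $t$ into ordinary Lebesgue integrals of the standard Haar function $h_{j_i,m_i}(u)$. Hence $H_{j_i,m_i}$ is supported in $[\alpha_i,\beta_i]$ with $\alpha_i = \Phi_i^{-1}(m_i/2^{j_i})$ and $\beta_i = \Phi_i^{-1}((m_i+1)/2^{j_i})$, and in particular $H_{j_i,m_i}(a_i) = H_{j_i,m_i}(b_i) = 0$. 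Second, a direct calculation on the two halves of the support shows that $|H_{j_i,m_i}|$ attains its maximum $\tfrac12 \cdot 2^{-j_i/2}$ at the ``midpoint'' $\Phi_i^{-1}((2m_i+1)/2^{j_i+1})$, giving the pointwise bound
\[
|H_{j_i,m_i}(x)| \le \tfrac12\, 2^{-j_i/2} \quad \text{for all } x \in [\alpha_i,\beta_i],
\]
with $H_{j_i,m_i} \equiv 0$ outside $[\alpha_i,\beta_i]$.

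Then I integrate by parts in each variable $x_i$ with $i \in \uu$ inside
\[
\widehat{F}^{(\bsvarphi)}((\bsj_\uu,-\bsone),(\bsm_\uu,\bszero)) = \int_{[\bsa,\bsb]} F(\bsx)\, h^{(\bsvarphi)}_{(\bsj_\uu,-\bsone),(\bsm_\uu,\bszero)}(\bsx)\,\varphi(\bsx)\,\rd \bsx.
\]
The first property above eliminates every boundary term and yields
\[
\widehat{F}^{(\bsvarphi)}((\bsj_\uu,-\bsone),(\bsm_\uu,\bszero)) = (-1)^{|\uu|} \int_{[\bsa,\bsb]} \frac{\partial^{|\uu|}F}{\partial \bsx_\uu}(\bsx) \prod_{i \in \uu} H_{j_i,m_i}(x_i) \prod_{i \notin \uu} \varphi_i(x_i)\,\rd \bsx.
\]
Bounding each $|H_{j_i,m_i}|$ by its uniform bound on $[\alpha_i,\beta_i]$, pulling $\sup_{\bsx \in [\bsa,\bsb]}|\partial^{|\uu|}F/\partial \bsx_\uu|$ out of the integral, integrating $\varphi_i$ to $1$ for $i \notin \uu$, and integrating the constant $1$ over $[\alpha_i,\beta_i]$ (of length $\beta_i - \alpha_i = 2\lambda_{j_i,m_i}$) for $i \in \uu$ produces the claimed bound: the factors $\tfrac12$ from the amplitude of $H$ and $2$ from the support length combine, leaving exactly $\prod_{i\in\uu} 2^{-j_i/2}\lambda_{j_i,m_i} = 2^{-|\bsj_\uu|/2} \prod_{i \in \uu}\lambda_{j_i,m_i}$.

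The one delicate point is verifying the endpoint condition $H_{j_i,m_i}(b_i)=0$, equivalent via the substitution $u=\Phi_i(t)$ to the zero-mean property $\int_0^1 h_{j_i,m_i}(u)\,\rd u = 0$ of the standard Haar function; this is precisely what makes iterated integration by parts boundary-free, and so removes any dependence on the density $\varphi$ itself beyond the endpoints $\alpha_i,\beta_i$. The remaining steps are routine estimation.
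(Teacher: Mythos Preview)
Your proposal is correct and follows essentially the same approach as the paper: define the antiderivative $H_{j,m}(x)=\int_a^x h_{j,m}(\Phi(t))\varphi(t)\,\mathrm{d}t$, observe that it vanishes at the endpoints and is bounded pointwise by $2^{-1-j/2}$ on its support $[\Phi^{-1}(m/2^j),\Phi^{-1}((m+1)/2^j)]$, integrate by parts in each coordinate $i\in\uu$, and then bound the resulting integral by the sup of the mixed partial times $\prod_{i\in\uu}\int_{a_i}^{b_i} H_{j_i,m_i}$. The only cosmetic difference is that the paper records the integral bound $\int_a^b H_{j,m}\le 2^{-1-j/2}(\beta-\alpha)$ directly, whereas you phrase the same step as ``uniform bound times length of support''; the arithmetic and the final estimate are identical.
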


\begin{proof}
To simplify the notation we write $a$, $b$, $\varphi$, $\Phi$ and $\Phi^{-1}$ instead of $a_i$, $b_i$, $\varphi_i$, $\Phi_i$ and $\Phi_i^{-1}$ here. Define $H_{j,m}:[a,b] \to \RR$ by
\begin{align*}
H_{j, m}(x) := & \int_a^x h^{(\varphi)}_{j, m}(y) \varphi(y) \,\mathrm{d} y = \int_0^{\Phi(x)}  h_{j, m}(y) \,\mathrm{d} y \\ =  &  2^{j/2} \times \begin{cases}  \Phi(x) - \frac{m}{2^j} & \mbox{if } \Phi^{-1}(\frac{m}{2^j}) \le x < \Phi^{-1}(\frac{m+1/2}{2^j}),  \\[0.5em] \frac{m+1}{2^j} - \Phi(x) & \mbox{if } \Phi^{-1}(\frac{m+1/2}{2^j}) \le x < \Phi^{-1}(\frac{m+1}{2^j}), \\[0.5em] 0 & \mbox{otherwise}.  \end{cases}
\end{align*}
From this we directly obtain for $(j,m)\not=(-1,0)$ that $$H_{j, m}(a) = H_{j, m}(b) = H_{j,m}\left(\Phi^{-1}\left(\frac{m}{2^j}\right)\right) = H_{j,m}\left(\Phi^{-1}\left(\frac{m+1}{2^j}\right)\right) = 0,$$ $$H_{j,m}\left(\Phi^{-1}\left(\frac{2m+1}{2^{j+1}}\right)\right) = \frac{1}{2^{1+j/2}},$$ $$0 \le H_{j, m}(x) \le \frac{1}{2^{1+j/2}}\quad \mbox{ for all $x \in [a, b]$,}$$ and further
\begin{equation*}
\int_a^b H_{j, m}(x) \,\mathrm{d} x \le \frac{1}{2^{1+j/2}} \left[ \Phi^{-1}\left(\frac{m+1}{2^j}\right) - \Phi^{-1}\left(\frac{m}{2^j}\right)  \right].
\end{equation*}

Using integration by parts in each coordinate $i \in \uu$ we obtain
\begin{align*}
\widehat{F}^{(\bsvarphi)}((\bsj_\uu, -\bsone), (\bsm_\uu, \bszero)) = &  \int_{[\bsa, \bsb]} F(\bsx) h^{(\bsvarphi)}_{(\bsj_\uu, -\bsone), (\bsm_\uu, \bszero) }(\bsx) \varphi(\bsx) \,\mathrm{d}\bsx \\  =  & (-1)^{|\uu|}  \int_{[\bsa, \bsb]} \frac{\partial^{|\uu|} F}{\partial \bsx_\uu} (\bsx) \prod_{i \in \uu} H_{j_i, m_i}(x_i) \prod_{i \notin \uu} \varphi_i(x_i) \,\mathrm{d} \bsx.
\end{align*}
This implies that
\begin{align*}
|\widehat{F}^{(\bsvarphi)}((\bsj_\uu, -\bsone), (\bsm_\uu, \bszero)) | \le &  \sup_{\bsx \in [\bsa, \bsb]} \left| \frac{\partial^{|\uu|} F}{\partial \bsx_\uu} (\bsx) \right| \,  \prod_{i \in \uu} \int_{a_i}^{b_i} H_{j_i, m_i}(x) \,\mathrm{d} x \\ \le &  \frac{1}{2^{ |\bsj_\uu| / 2 }} \left(\prod_{i \in \uu} \lambda_{j_i, m_i} \right) \sup_{\bsx \in [\bsa, \bsb]} \left| \frac{\partial^{|\uu|} F}{\partial \bsx_\uu} (\bsx) \right|,
\end{align*}
where $\lambda_{j_i, m_i}$ is as in \eqref{deflambda}.
\end{proof}

The bound on the Haar coefficients $\widehat{F}^{(\bsvarphi)}(\bsj, \bsm)$ in Lemma~\ref{le:esthcoeff} depends on $\Phi^{-1}$. When estimating the worst-case error later on we can remove this dependence since there we only need to estimate sums of the $\lambda_{j_i,m_i}$ and then it is enough to use the property that $\varphi_i$ is a probability density function. More precisely, we have:
\begin{lemma}\label{S1}
We have
$$\sum_{\bsm_\uu \in D_{\bsj_\uu}} \prod_{i \in \uu} \lambda_{j_i, m_i}=\prod_{i \in \uu} \frac{b_i - a_i}{2}.$$ 
\end{lemma}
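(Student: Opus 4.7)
The plan is to exploit the product structure of the summand and then recognize a telescoping sum in each coordinate.

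First I would note that $D_{\bsj_\uu} = \prod_{i \in \uu} D_{j_i}$, so the sum over $\bsm_\uu$ of a product whose factors depend only on individual coordinates factorizes:
\begin{equation*}
\sum_{\bsm_\uu \in D_{\bsj_\uu}} \prod_{i \in \uu} \lambda_{j_i, m_i} = \prod_{i \in \uu} \sum_{m_i = 0}^{2^{j_i}-1} \lambda_{j_i, m_i}.
\end{equation*}

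Next, for each fixed $i \in \uu$, I would substitute the definition \eqref{deflambda} and observe that the sum telescopes:
\begin{equation*}
\sum_{m_i = 0}^{2^{j_i}-1} \lambda_{j_i, m_i} = \frac{1}{2}\sum_{m_i = 0}^{2^{j_i}-1} \left[ \Phi_i^{-1}\left(\frac{m_i+1}{2^{j_i}}\right) - \Phi_i^{-1}\left(\frac{m_i}{2^{j_i}}\right) \right] = \frac{1}{2}\bigl[\Phi_i^{-1}(1) - \Phi_i^{-1}(0)\bigr].
\end{equation*}
Since $\Phi_i:[a_i,b_i] \to [0,1]$ is the CDF of a probability density function on $[a_i,b_i]$, we have $\Phi_i(a_i) = 0$ and $\Phi_i(b_i) = 1$, hence $\Phi_i^{-1}(0) = a_i$ and $\Phi_i^{-1}(1) = b_i$. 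Thus the inner sum equals $(b_i - a_i)/2$, and taking the product over $i \in \uu$ yields the claim.

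There is essentially no obstacle: the statement is a direct consequence of telescoping, the product structure of $\lambda_{j_i,m_i}$, and the boundary values of the CDF. The only thing to be mindful of is that the identity holds for every $\bsj_\uu \in \mathbb{N}_0^{|\uu|}$ (the values $j_i = -1$ are excluded since $D_{-1} = \{0\}$ would make the expression for $\lambda$ require $\Phi_i^{-1}(1) - \Phi_i^{-1}(0) = b_i - a_i$ in one step, which still works but is the trivial case).
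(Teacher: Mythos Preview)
Your proof is correct and follows essentially the same route as the paper: factor the sum into a product over $i\in\uu$, recognize the telescoping sum $\sum_{m_i} \lambda_{j_i,m_i} = \tfrac{1}{2}\bigl(\Phi_i^{-1}(1)-\Phi_i^{-1}(0)\bigr)$, and use the boundary values $\Phi_i^{-1}(0)=a_i$, $\Phi_i^{-1}(1)=b_i$. Your presentation is slightly more explicit, but the argument is identical.
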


\begin{proof}
We have
\begin{equation*}
\sum_{\bsm_\uu \in D_{\bsj_\uu}} \prod_{i \in \uu} \lambda_{j_i, m_i} = \prod_{i \in \uu} \sum_{k=0}^{2^{j_i}-1} \lambda_{j_i, k} = \prod_{i \in \uu} \frac{1}{2} \underbrace{\left( \Phi^{-1}_i(1) - \Phi^{-1}_i(0) \right)}_{ = b_i - a_i} =  \prod_{i \in \uu} \frac{b_i - a_i}{2}.
\end{equation*}
\end{proof}

Note that in Lemma~\ref{S1} we need the assumption that $-\infty < a_i < b_i < \infty$.

\paragraph{Walsh functions.} Later, when we analyse the integration error, it will be convenient to represent the Haar functions in terms of Walsh functions. In the following we introduce Walsh functions and show a well known connection to Haar functions. 

Let the real number $x \in [0,1)$ have dyadic expansion $x =
\frac{\xi_1}{2} + \frac{\xi_2}{2^2} + \cdots$, with digits $\xi_i \in \{0,1\}$. For any dyadic expansion we assume that infinitely many digits are different from $1$, which makes the expansion unique. For $k \in
\mathbb{N}_0$ with dyadic expansion $k = \kappa_0+\kappa_1 2 +\cdots+\kappa_{r-1} 2^{r-1}$ and $
\kappa_0,\ldots, \kappa_{r-1} \in \{0,1\}$, define the $k^{{\rm th}}$ (dyadic) Walsh function
by
$$\mathrm{wal}_k(x) := (-1)^{\kappa_0 \xi_1+\kappa_1 \xi_2+\cdots + \kappa_{r-1} \xi_r}.$$ 

In dimensions $s > 1$ we use products of the Walsh functions. Let $\bsx = (x_1, x_2,\ldots, x_s) \in [0,1]^s$ and $\bsk = (k_1, k_2, \ldots, k_s) \in \mathbb{N}_0^s$. Then we define the $\bsk^{{\rm th}}$ (dyadic) Walsh function by
\begin{equation*}
\mathrm{wal}_{\bsk}(\bsx) := \prod_{i=1}^s \wal_{k_i}(x_i).
\end{equation*}

We have the following well known representation of Haar functions in terms of Walsh functions.
\begin{lemma}\label{le1}
For $j \in \NN_0$, $m\in D_j$ and $x\in [0,1)$ we have $$h_{j,m}(x)=\frac{1}{2^{j/2}} \sum_{k=0}^{2^j-1} \wal_{k+2^j}\left(x \ominus \frac{m}{2^j}\right).$$ Here $\ominus$ denotes the digit-wise dyadic subtraction, i.e., for $x=\frac{\xi_1}{2}+\frac{\xi_2}{2^2}+\cdots$ and $y=\frac{\eta_1}{2}+\frac{\eta_2}{2^2}+\cdots$ with digits $\xi_i,\eta_i \in \{0,1\}$ we set $x \ominus y:= \frac{\zeta_1}{2}+\frac{\zeta_2}{2^2}+\cdots$ with $\zeta_i:=\xi_i-\eta_i \pmod{2}$.
\end{lemma}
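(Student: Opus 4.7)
The plan is to compute the right-hand side directly by exploiting the multiplicative structure of the Walsh functions. Write $y:=x\ominus m/2^j$ and let $\eta_1,\eta_2,\ldots$ be its dyadic digits. Since any $k\in\{0,1,\ldots,2^j-1\}$ has bits $\kappa_0,\ldots,\kappa_{j-1}$ (possibly padded with zeros) and $2^j$ contributes the extra bit $\kappa_j=1$, the definition of $\wal$ gives
\[
\wal_{k+2^j}(y)=(-1)^{\eta_{j+1}}\,\wal_k(y),
\]
so the Walsh bit corresponding to the leading $1$ can be pulled out of the sum.

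Next, I would invoke the elementary orthogonality identity
\[
\sum_{k=0}^{2^j-1}\wal_k(y)=\prod_{i=1}^{j}\bigl(1+(-1)^{\eta_i}\bigr)=2^j\,\mathbb{1}\bigl[y\in[0,2^{-j})\bigr],
\]
obtained by writing the exponent as $\kappa_0\eta_1+\cdots+\kappa_{j-1}\eta_j$ and summing each $\kappa_i$ independently over $\{0,1\}$.

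Then I would translate the indicator back to $x$: since $m/2^j$ has its dyadic expansion supported in the first $j$ positions, the digit-wise subtraction $y=x\ominus m/2^j$ satisfies $\eta_1=\cdots=\eta_j=0$ exactly when the first $j$ digits of $x$ coincide with those of $m/2^j$, i.e.\ $x\in I_{j,m}=[m/2^j,(m+1)/2^j)$. On that interval, $\eta_{j+1}$ coincides with the $(j+1)$-st digit of $x-m/2^j$, which is $0$ on the left half $[m/2^j,(2m+1)/2^{j+1})$ and $1$ on the right half $[(2m+1)/2^{j+1},(m+1)/2^j)$. Putting the three ingredients together produces
\[
\frac{1}{2^{j/2}}\sum_{k=0}^{2^j-1}\wal_{k+2^j}(y)=2^{j/2}(-1)^{\eta_{j+1}}\mathbb{1}[x\in I_{j,m}],
\]
which matches the piecewise definition of $h_{j,m}(x)$.

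I do not anticipate a genuine obstacle here; the only mild subtlety is the bookkeeping around the digit-wise subtraction $\ominus$ and the convention that dyadic expansions have infinitely many zeros — one must check that $y=x\ominus m/2^j$ really does have its first $j$ digits equal to zero precisely on $I_{j,m}$ (rather than on a measure-zero-shifted version), which follows from the chosen normalization. The rest is the standard Walsh character-sum computation.
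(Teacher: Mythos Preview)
Your proof is correct and is essentially identical to the paper's own argument: both factor out the sign $(-1)^{\eta_{j+1}}$ coming from the leading bit of $k+2^j$, then use the product structure $\sum_{k=0}^{2^j-1}\wal_k(y)=\prod_{i=1}^j(1+(-1)^{\eta_i})$ to obtain $2^j$ times the indicator of $I_{j,m}$. The only cosmetic difference is that you introduce the intermediate variable $y=x\ominus m/2^j$ whereas the paper works directly with the digits $\xi_r\ominus m_{j-r}$.
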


\begin{proof}
For completeness we provide a short proof. Let $m=m_0+m_1 2+ \cdots + m_{j-1} 2^{j-1}$ and $x=\frac{\xi_1}{2}+\frac{\xi_2}{2^2}+\cdots$ be the binary expansions of $m$ and $x$, respectively. Then we have
\begin{align*}
\frac{1}{2^{j/2}} \sum_{k=0}^{2^j-1} \wal_{k+2^j}\left(x \ominus \frac{m}{2^j}\right) = &  \frac{1}{2^{j/2}}  \sum_{\kappa_0,\ldots,\kappa_{j-1}=0}^1 (-1)^{\kappa_0(\xi_1 \ominus m_{j-1})+\cdots + \kappa_{j-1}(\xi_j \ominus m_0)+ \xi_{j+1}} \\
= &  \frac{1}{2^{j/2}} (-1)^{\xi_{j+1}} \prod_{r=1}^j \sum_{\kappa=0}^1 (-1)^{\kappa (\xi_r \ominus m_{j-r})}\\
= & \left\{ 
\begin{array}{ll}
0 & \mbox{ if } x \not \in I_{j,m},\\
2^{j/2} (-1)^{\xi_{j+1}} & \mbox{ if } x \in I_{j,m},
\end{array}\right.\\
= & h_{j,m}(x).
\end{align*}
\end{proof}

\section{General error analysis}\label{sec:errest}

In this section we present a general error analysis for functions $F$ with bounded norm $\| F \|_{p,s,\bsgamma} $. We obtain an upper bound on the integration error that can be used later on for digital nets and sequences as well as for polynomial lattice point sets. The error analysis requires the use of projection regular point sets with $N=2^m$ elements in $[0,1)^s$.

\begin{definition}[Projection regular point set]\rm 
Let $m \in \NN$ and let $\cP=\{\bsx_0,\bsx_1,\ldots,\bsx_{2^m-1}\}$ be a $2^m$-element point set in $[0,1)^s$. Let $\bsx_n=(x_{n,1},\ldots,x_{n,s})$ for $n \in \{0,\ldots, 2^m -1\}$ and let $\cP_{\{i\}}=\{x_{0,i},x_{1,i},\ldots,x_{2^m-1,i}\}$ for $i \in [s]$ be the one-dimensional projections of $\cP$. We call $\cP$ {\it projection regular}, if for every $i \in [s]$ every interval $[\frac{a}{2^m},\frac{a+1}{2^m})$, $a \in D_m$, contains exactly one element from $\cP_{\{i\}}$.  
\end{definition}

The error bound involves a function $\mu$ on $\NN_0$ which measures the length of the binary digit expansion of a natural number. 

\begin{definition}\rm \label{def:mu}
For $k \in \NN_0$ we define $\mu(k)=0$ if $k=0$ and $\mu(k)=l$ for some $l \in \NN$ if $k$ has binary expansion $k=\kappa_0+\kappa_1 2+\cdots + \kappa_{l-1} 2^{l-1}$ with $\kappa_j \in \{0, 1\}$ for all $j \in \{0,1,\ldots,l-1\}$ and $\kappa_{l-1}=1$. 

For $\bsk=(k_1,k_2,\ldots,k_s)\in \NN_0^s$ we define $$\mu(\bsk)=\mu(k_1)+\mu(k_2)+\cdots +\mu(k_s).$$
\end{definition}

Now we can state the announced error estimate.

\begin{theorem}\label{thm1}
Let $m \in \NN$ and let $\cP= \{\bsy_0,\ldots,\bsy_{2^m-1}\}$ be a projection regular $2^m$-element point set in $[0,1)^s$. Let $1 \le p,q \le \infty$ be such that $\frac{1}{p}+\frac{1}{q}=1$. Then we have
\begin{equation*}
{\rm wce}(\cP; p,s,\bsgamma, \varphi)\le \left(\sum_{\emptyset \neq \uu \subseteq [s]}  \gamma_\uu^q \prod_{i \in \uu} (b_i - a_i)^q \left(\frac{2^{|\uu|}}{2^m}  + E(\cP,\uu)   \right)^q \right)^{1/q},
\end{equation*}
with the obvious modifications if $q = \infty$, and where 
\begin{equation}\label{def:epu}
E(\cP,\uu):=\sum_{\bsk_\uu \in \{1, \ldots, 2^m-1\}^{|\uu|}} \frac{1}{2^{\mu(\bsk_\uu)}} \left|\frac{1}{2^m} \sum_{n=0}^{2^m -1} \mathrm{wal}_{\bsk_\uu}(\bsy_{n,\uu})\right|.
\end{equation}
Here, for $\uu \subseteq [s]$, $\bsy_{n,\uu}$ denotes the projection of the point $\bsy_n$ to the components which belong to $\uu$ and likewise $\bsk_\uu=(k_i)_{i \in \uu}$.
\end{theorem}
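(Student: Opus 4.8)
The plan is to start from the worst-case error duality and reduce everything to estimating a single weighted sum of Haar coefficients. Since $\|F\|_{p,s,\bsgamma}\le 1$, by Hölder's inequality applied to the splitting of the error over the subsets $\uu\subseteq[s]$, it suffices to bound, for each fixed $\emptyset\neq\uu\subseteq[s]$, the quantity obtained by summing the integration errors of the individual Haar functions weighted by their coefficients. Concretely, I would expand $F$ in its $\bsvarphi$-orthogonal Haar series \eqref{f:HaarSeries} and use linearity of the error functional ${\rm err}(\cdot;\varphi;\cP)$ to write the error as a sum over $(\bsj,\bsm)$ of $\widehat{F}^{(\bsvarphi)}(\bsj,\bsm)\,{\rm err}(h^{(\bsvarphi)}_{\bsj,\bsm};\varphi;\cP)$. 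The constant Haar function ($\bsj=-\bsone$) integrates exactly against a measure-preserving map, so only terms with some $j_i\ge 0$ survive; grouping these by the support $\uu=\{i:j_i\ge0\}$ gives the outer sum over nonempty $\uu$.

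Next I would bound the two factors in each term. For the coefficients, Lemma~\ref{le:esthcoeff} gives $|\widehat{F}^{(\bsvarphi)}((\bsj_\uu,-\bsone),(\bsm_\uu,\bszero))|\le 2^{-|\bsj_\uu|/2}\prod_{i\in\uu}\lambda_{j_i,m_i}\,\sup|\partial^{|\uu|}F/\partial\bsx_\uu|$. After applying Hölder across $\uu$ to pull out the norm, the remaining task is to control the error functional applied to a single Haar function, summed against the coefficient bound over all $(\bsj_\uu,\bsm_\uu)$. Here I use the key fact that ${\rm err}(h^{(\bsvarphi)}_{\bsj,\bsm};\varphi;\cP)=\frac1N\sum_n h_{\bsj,\bsm}(\bsy_n)-\int_{[0,1]^s}h_{\bsj,\bsm}$, because the change of variables $\bsx=\Phi^{-1}(\bsy)$ turns the $\varphi$-weighted integral into the Lebesgue integral of the ordinary Haar function and turns $h^{(\bsvarphi)}_{\bsj,\bsm}(\Phi^{-1}(\bsy_n))$ into $h_{\bsj,\bsm}(\bsy_n)$. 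This is precisely the mechanism by which $\varphi$ disappears from the convergence estimate.

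The heart of the argument is then to expand each $h_{j_i,m_i}$ via Lemma~\ref{le1} into Walsh functions, so that ${\rm err}(h_{\bsj,\bsm};\cP)$ becomes a linear combination of Walsh sums $\frac1{2^m}\sum_n\wal_{\bsk_\uu}(\bsy_{n,\uu})$. The coefficient prefactor $2^{-|\bsj_\uu|/2}$ combines with the $2^{-j_i/2}$ from Lemma~\ref{le1} to produce the $2^{-\mu(\bsk_\uu)}$ weight appearing in \eqref{def:epu}: one shows that summing over the Haar indices $(j_i,m_i)$ at fixed $\bsk_\uu$ reorganizes into the sum over $\bsk_\uu\in\{1,\dots,2^m-1\}^{|\uu|}$ with weight $2^{-\mu(\bsk_\uu)}$, yielding exactly $E(\cP,\uu)$. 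The additional term $2^{|\uu|}/2^m$ comes from the Haar indices with large level $j_i\ge m$: for these, projection regularity of $\cP$ forces each interval $I_{j_i,m_i}$ of length $2^{-j_i}\le 2^{-m}$ to contain at most one projected point, so the error of the corresponding Haar function is bounded crudely but summably, and after summing the coefficient bounds via Lemma~\ref{S1} (which contributes the factor $\prod_{i\in\uu}(b_i-a_i)/2$) one obtains the clean geometric term $2^{|\uu|}/2^m$.

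The main obstacle I anticipate is the careful bookkeeping in this last reorganization: separating, for each coordinate $i\in\uu$, the ``low-level'' Haar contributions ($0\le j_i<m$, captured by Walsh sums and projection regularity via Lemma~\ref{le1}) from the ``high-level'' contributions ($j_i\ge m$, captured by the $2^{|\uu|}/2^m$ term), and then verifying that the telescoping of $\lambda_{j_i,m_i}$ through Lemma~\ref{S1} exactly produces the stated $(b_i-a_i)$ factors without leaving residual dependence on $\Phi^{-1}$. One must also check that the interchange of summation (Haar series, then Walsh expansion) is legitimate — absolute convergence follows from $\|F\|_{p,s,\bsgamma}<\infty$ together with the $\lambda$-summability — and that the high-level tail genuinely telescopes rather than merely being bounded termwise. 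Once these two regimes are handled and Hölder is applied across the outer sum over $\uu$, the stated bound follows.
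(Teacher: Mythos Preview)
Your proposal is correct and follows essentially the same route as the paper's proof: Haar expansion, grouping by the active set $\uu=\{i:j_i\ge 0\}$, Lemma~\ref{le:esthcoeff} combined with H\"older, and then a split of each resulting sum into low levels $\bsj_\uu\in\{0,\dots,m-1\}^{|\uu|}$ (handled via the Walsh representation of Lemma~\ref{le1} together with Lemma~\ref{S1}, producing $E(\cP,\uu)$) and the remaining high-level tail (handled via projection regularity and Lemma~\ref{S1}, producing the $2^{|\uu|}/2^m$ term). The only minor inaccuracy is that projection regularity plays no role in the low-level part---there the paper first replaces the $\bsm_\uu$-sum by a maximum over $\bsm_\uu$ and applies Lemma~\ref{S1} directly to extract the $(b_i-a_i)$ factors, reserving projection regularity solely for bounding the high-level Haar averages.
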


\begin{proof}
Conveniently we have
\begin{equation*}
h^{(\varphi)}_{\bsj, \bsm}(\Phi^{-1}(\bsy_n)) = h_{\bsj, \bsm}(\bsy_n).
\end{equation*}
Using this property and the Haar series expansion \eqref{f:HaarSeries} the integration error can be rewritten as
\begin{align}\label{est:err1}
\mathrm{err}(F; \varphi; \cP) & = \frac{1}{2^m} \sum_{n=0}^{2^m-1} F(\Phi^{-1}(\bsy_n)) - \int_{[\bsa, \bsb]} F(\bsx) \varphi(\bsx) \,\mathrm{d} \bsx \nonumber\\
& =  \sum_{\stackrel{\bsj \in \mathbb{N}_{-1}^s}{\bsj \neq -\bsone}} \sum_{\stackrel{ \bsm \in D_{\bsj} }{\cP \cap I_{\bsj, \bsm} \neq \emptyset} } \widehat{F}^{(\bsvarphi)}(\bsj, \bsm) \frac{1}{2^m} \sum_{n=0}^{2^m-1} h_{\bsj, \bsm}(\bsy_n).
\end{align}

For $\uu \subseteq [s]$ let $\cP_\uu$ be the set of projections of the points from $\cP$ to the components which belong to $\uu$, i.e., $$\cP_\uu=\{\bsy_{0,\uu}, \bsy_{1,\uu},\ldots ,\bsy_{2^m -1,\uu}\}.$$ Now we rearrange the sum over all $\bsj \in \NN_{-1}^s\setminus \{-\bsone\}$ in \eqref{est:err1} according to the sets of components of $\bsj$'s which differ from $-1$. We have 
\begin{align}\label{err_bound1}
\lefteqn{| \mathrm{err}(F;\varphi;\cP)|}\nonumber \\
& =  \left| \sum_{\emptyset \neq \uu \subseteq [s]} \sum_{\bsj_\uu \in \mathbb{N}_0^{|\uu|}} \sum_{\stackrel{ \bsm_\uu \in D_{\bsj_\uu} }{\cP_\uu \cap I_{\bsj_\uu, \bsm_\uu} \neq \emptyset}} \widehat{F}^{(\bsvarphi)}((\bsj_\uu, - \bsone), (\bsm_\uu, \bszero)) \frac{1}{2^m} \sum_{n=0}^{2^m-1} h_{(\bsj_\uu, -\bsone), (\bsm_\uu, \bszero)}(\bsy_n) \right| \nonumber \\ 
& \le   \sum_{\emptyset \neq \uu \subseteq [s]} \sum_{\bsj_\uu \in \mathbb{N}_0^{|\uu|}} \sum_{\stackrel{ \bsm_\uu \in D_{\bsj_\uu} }{\cP_\uu \cap I_{\bsj_\uu, \bsm_\uu} \neq \emptyset}}  \left| \widehat{F}^{(\bsvarphi)}((\bsj_\uu, - \bsone), (\bsm_\uu, \bszero)) \right| \left| \frac{1}{2^m} \sum_{n=0}^{2^m-1} h_{(\bsj_\uu, -\bsone), (\bsm_\uu, \bszero)}(\bsy_n) \right| \nonumber \\ 
& \le \| F\|_{p, s,\bsgamma} \left( \sum_{\emptyset \neq \uu \subseteq [s]} \left( \gamma_\uu \sum_{\bsj_\uu \in \mathbb{N}_0^{|\uu|}} \frac{1}{2^{|\bsj_\uu|/ 2}}  \sum_{\stackrel{ \bsm_\uu \in D_{\bsj_\uu} }{\cP_\uu \cap I_{\bsj_\uu, \bsm_\uu} \neq \emptyset}}  \prod_{i \in \uu} \lambda_{j_i, m_i} \left| \frac{1}{2^m} \sum_{n=0}^{2^m-1} h_{(\bsj_\uu, -\bsone), (\bsm_\uu, \bszero)}(\bsy_n) \right| \right)^q \right)^{1/q},   
\end{align}
where we used Lemma~\ref{le:esthcoeff} and H\"older's inequality, which is justified because  $\frac{1}{p}+\frac{1}{q}=1$.

Denote the summand, that is raised to the power of $q$ by $S_\uu$, i.e.,
\begin{equation}\label{defSu}
S_\uu:=\gamma_\uu \sum_{\bsj_\uu \in \mathbb{N}_0^{|\uu|}} \frac{1}{2^{|\bsj_\uu|/ 2}}  \sum_{\stackrel{ \bsm_\uu \in D_{\bsj_\uu} }{\cP_\uu \cap I_{\bsj_\uu, \bsm_\uu} \neq \emptyset}}  \prod_{i \in \uu} \lambda_{j_i, m_i} \left| \frac{1}{2^m} \sum_{n=0}^{2^m-1} h_{(\bsj_\uu, -\bsone), (\bsm_\uu, \bszero)}(\bsy_n) \right|.
\end{equation}
In order to estimate $S_\uu$ we distinguish two cases. We write $$S_\uu=S_\uu^{(1)}+S_\uu^{(2)},$$ where 
$$S_\uu^{(1)}:=\gamma_\uu \sum_{\bsj_\uu \in \{0,1,\ldots,m-1\}^{|\uu|}} \frac{1}{2^{|\bsj_\uu|/ 2}}  \sum_{\stackrel{ \bsm_\uu \in D_{\bsj_\uu} }{\cP_\uu \cap I_{\bsj_\uu, \bsm_\uu} \neq \emptyset}}  \prod_{i \in \uu} \lambda_{j_i, m_i} \left| \frac{1}{2^m} \sum_{n=0}^{2^m-1} h_{(\bsj_\uu, -\bsone), (\bsm_\uu, \bszero)}(\bsy_n) \right|$$ and $$S_\uu^{(2)}:=S_\uu - S_\uu^{(1)}.$$

We consider $S_\uu^{(1)}$ where the summation is restricted to all $\bsj_\uu \in \{0, 1, \ldots, m-1\}^{|\uu|}$. We separate the sum over $\bsm_\uu$ into two parts using
\begin{eqnarray*}
\lefteqn{\sum_{\stackrel{ \bsm_\uu \in D_{\bsj_\uu} }{\cP_\uu \cap I_{\bsj_\uu, \bsm_\uu} \neq \emptyset}}  \prod_{i \in \uu} \lambda_{j_i, m_i} \left| \frac{1}{2^m} \sum_{n=0}^{2^m-1} h_{(\bsj_\uu, -\bsone), (\bsm_\uu, \bszero)}(\bsy_n) \right|}\\ 
& \le & \left(\sum_{\bsm_\uu \in D_{\bsj_\uu}}  \prod_{i \in \uu} \lambda_{j_i, m_i}\right) \max_{\bsell_\uu \in D_{\bsj_\uu}}  \left| \frac{1}{2^m} \sum_{n=0}^{2^m-1} h_{(\bsj_\uu, -\bsone), (\bsell_\uu, \bszero)}(\bsy_n) \right| \\ 
& \le &  \prod_{i \in \uu} \frac{b_i - a_i}{2} \max_{\bsell_\uu \in D_{\bsj_\uu}}  \left| \frac{1}{2^m} \sum_{n=0}^{2^m-1} h_{(\bsj_\uu, -\bsone), (\bsell_\uu, \bszero)}(\bsy_n) \right|,
\end{eqnarray*}
where we used Lemma~\ref{S1} in the last step. 

Further, according to Lemma~\ref{le1},
\begin{equation*}
h_{(\bsj_\uu, - \bsone), (\bsm_\uu, \bszero)}(\bsx) = \frac{1}{2^{|\bsj_\uu|/2}} \sum_{\bsk_\uu \in D_{\bsj_\uu}} \mathrm{wal}_{\bsk_\uu + 2^{\bsj_\uu}} \left(\bsx_\uu \ominus \frac{\bsm_\uu}{2^{\bsj_\uu}} \right),
\end{equation*}
where the notation $\frac{\bsm_\uu}{2^{\bsj_\uu}}$ has to be interpreted as the vector $(\frac{m_i}{2^{j_i}})_{i \in \uu}$ where $\bsj_\uu = (j_i)_{i \in \uu}$, $\bsm_\uu=(m_i)_{i \in \uu}$. Hence
\begin{align*}
\frac{1}{2^m} \sum_{n=0}^{2^m-1} h_{(\bsj_\uu, -\bsone), (\bsell_\uu, \bszero)}(\bsy_n) & = \frac{1}{2^{|\bsj_\uu|/2}} \sum_{\bsk_\uu \in D_{\bsj_\uu}} \frac{1}{2^m} \sum_{n=0}^{2^m-1} \mathrm{wal}_{\bsk_\uu + 2^{\bsj_\uu}} \left(\bsy_{n,\uu} \ominus \frac{\bsell_\uu}{2^{\bsj_\uu}} \right)\\
& = \frac{1}{2^{|\bsj_\uu|/2}} \sum_{\bsk_\uu \in D_{\bsj_\uu}} \mathrm{wal}_{\bsk_\uu + 2^{\bsj_\uu}} \left(\frac{\bsell_\uu}{2^{\bsj_\uu}} \right) \frac{1}{2^m} \sum_{n=0}^{2^m-1} \mathrm{wal}_{\bsk_\uu + 2^{\bsj_\uu}} (\bsy_{n,\uu}).
\end{align*}
Therefore
\begin{equation}\label{S2}
\max_{\bsell_\uu \in D_{\bsj_\uu}} \left| \frac{1}{N} \sum_{n=0}^{N-1} h_{(\bsj_\uu, -\bsone), (\bsell_\uu, \bszero)}(\bsy_n) \right| \le  \frac{1}{2^{|\bsj_\uu|/ 2}} \sum_{\bsk_\uu \in D_{\bsj_\uu}} \left|\frac{1}{2^m} \sum_{n=0}^{2^m-1} \mathrm{wal}_{\bsk_\uu + 2^{\bsj_\uu}}(\bsy_{n,\uu})\right|.
\end{equation}
We use \eqref{S2} to estimate $S_\uu^{(1)}$ and obtain this way  
\begin{eqnarray*}
S_\uu^{(1)} & \le  & \gamma_\uu \prod_{i \in \uu} \frac{b_i - a_i}{2} \sum_{\bsj_\uu \in \{0, 1, \ldots, m-1\}^{|\uu|}} \frac{1}{2^{|\bsj_\uu|}} \sum_{\bsk_\uu \in D_{\bsj_\uu}} \left|\frac{1}{2^m} \sum_{n=0}^{2^m-1} \mathrm{wal}_{\bsk_\uu+ 2^{\bsj_\uu}}(\bsy_{n,\uu})\right|\nonumber \\
& = & \gamma_\uu \prod_{i \in \uu} (b_i - a_i) \sum_{\bsk_\uu \in \{1, \ldots, 2^m-1\}^{|\uu|}} \frac{1}{2^{\mu(\bsk_\uu)}} \left|\frac{1}{2^m} \sum_{n=0}^{2^m -1} \mathrm{wal}_{\bsk_\uu}(\bsy_{n,\uu})\right|.
\end{eqnarray*}
We illustrate the last step for the univariate case: Consider
$$\frac{b-a}{2} \sum_{j=0}^{m-1} \frac{1}{2^j} \sum_{k=0}^{2^j-1} \left|\frac{1}{2^m} \sum_{n=0}^{2^m-1} \mathrm{wal}_{k+ 2^j}(y_n)\right|$$ and substitute $k+2^j=\ell$. Then $\mu(\ell)=j+1$ and if $k$ runs through $\{0,1,\ldots,2^j-1\}$ and $j$ through $\{0,1,\ldots,m-1\}$, then $\ell$ runs through $\{1,2,\ldots,2^m -1\}$. Therefore
\begin{equation*}
\frac{b-a}{2} \sum_{j=0}^{m-1} \frac{1}{2^j} \sum_{k=0}^{2^j-1} \left|\frac{1}{2^m} \sum_{n=0}^{2^m-1} \mathrm{wal}_{k+ 2^j}(y_n)\right| = (b-a) \sum_{\ell=1}^{2^m -1} \frac{1}{2^{\mu(\ell)}} \left|\frac{1}{2^m} \sum_{n=0}^{2^m-1} \mathrm{wal}_{\ell}(y_n)\right|.
\end{equation*}
The same argumentation works for general projections $\uu$. With the definition of $E(\cP,\uu)$ in \eqref{def:epu} we obtain 
\begin{equation}\label{bd:Su1}
S_\uu^{(1)} \le  \gamma_\uu \left(\prod_{i \in \uu} (b_i - a_i)\right) E(\cP,\uu).   
\end{equation}

Now we estimate the sum $S_\uu^{(2)}$, where the summation is over all $\bsj_\uu \in \mathbb{N}_0^{|\uu|} \setminus \{0, 1, \ldots, m-1\}^{|\uu|}$. We partition the range of summation in the following way: For $\emptyset \neq \vv \subseteq \uu$ let $$B_\vv = \{ (j_i)_{i \in \uu} \in \NN_0^{|\uu|}\ :\  \mbox{$j_i \ge m$ if $i \in \vv$ and $0 \le j_i < m$ if $i \in \uu \setminus \vv$}\}.$$ Then $$\bigcup_{\emptyset \not= \vv \subseteq \uu} B_\vv = \mathbb{N}_0^{|\uu|} \setminus \{0, 1, \ldots, 2^m-1\}^{|\uu|}.$$ For $\bsj_\uu \in \mathbb{N}_0^{|\uu|} \setminus \{0, 1, \ldots, m-1\}^{|\uu|}$ and any $\bsm_\uu \in D_{\bsj_\uu}$, the set $\cP_\uu \cap I_{\bsj_\uu, \bsm_\uu}$ has at most $1$ element, since $\cP$, and therefore also $\cP_\uu$, is projection regular. Hence
\begin{equation*}
\left| \frac{1}{2^m} \sum_{n=0}^{2^m-1} h_{\bsj_u, \bsm_u}(\bsy_{n,u})  \right| \le \frac{1}{2^m}.
\end{equation*}
Then
\begin{eqnarray*}
S_\uu^{(2)} & \le & \gamma_\uu \sum_{\bsj_\uu \in \mathbb{N}_0^{|u|} \setminus \{0, 1, \ldots, 2^m-1\}^{|\uu|}} \frac{1}{2^{|\bsj_\uu| / 2}} \sum_{\stackrel{ \bsm_\uu \in D_{\bsj_\uu} }{\cP_\uu \cap I_{\bsj_\uu, \bsm_\uu} \neq \emptyset}}  \prod_{i \in \uu} \lambda_{j_i, m_i} \left| \frac{1}{2^m} \sum_{n=0}^{2^m-1} h_{(\bsj_\uu, -\bsone), (\bsm_\uu, \bszero)}(\bsy_{n,\uu}) \right| \\ 
& \le & \frac{\gamma_\uu}{2^m}  \sum_{\bsj_\uu \in \mathbb{N}_0^{|\uu|} \setminus \{0, 1, \ldots, 2^m-1\}^{|\uu|}} \frac{1}{2^{|\bsj_\uu| / 2}} \sum_{\bsm_\uu \in D_{\bsj_\uu}}  \prod_{i \in \uu} \lambda_{j_i, m_i}.
\end{eqnarray*}
Using Lemma~\ref{S1} again we get
\begin{eqnarray}\label{bd:Su2}
S_\uu^{(2)} & \le & \frac{\gamma_\uu}{2^m} \prod_{i \in \uu} \frac{b_i-a_i}{2}  \sum_{\emptyset \neq \vv \subseteq \uu} \left(\sum_{j=m}^{\infty}\frac{1}{2^{j/2}}\right)^{|\vv|} \left(\sum_{j=0}^{m-1}\frac{1}{2^{j/2}}\right)^{|\uu|-|\vv|}\nonumber\\
& = & \frac{\gamma_\uu}{2^m} \prod_{i \in \uu} \frac{b_i-a_i}{2} \left(\left(\sum_{j=0}^{\infty}\frac{1}{2^{j/2}}\right)^{|\uu|} -\left(\sum_{j=0}^{m-1} \frac{1}{2^{j/2}} \right)^{|\uu|}\right)\nonumber\\
& \le & \frac{\gamma_\uu}{2^m} \prod_{i \in \uu} \frac{b_i-a_i}{2 - \sqrt{2}}\nonumber\\
& \le & \frac{\gamma_\uu}{2^m} 2^{|\uu|} \prod_{i \in \uu} (b_i-a_i).
\end{eqnarray}

Combining the estimates \eqref{bd:Su1} and \eqref{bd:Su2} gives
$$S_\uu \le \gamma_\uu \prod_{i \in \uu} (b_i - a_i) \left(\frac{2^{|\uu|}}{2^m}  + E(\cP,|\uu|)  \right).$$
Inserting this estimate into \eqref{err_bound1} we obtain 
\begin{equation*}
|\mathrm{err}(F; \varphi; \cP)| \le \| F \|_{p,s,\bsgamma}\left(\sum_{\emptyset \neq \uu \subseteq [s]}  \gamma_\uu^q \prod_{i \in \uu} (b_i - a_i)^q \left(\frac{2^{|\uu|}}{2^m}  + E(\cP,\uu)   \right)^q \right)^{1/q}.
\end{equation*}
This yields the desired result. 
\end{proof}

In the following sections we will study the upper bound on the worst-case error from Theorem~\ref{thm1} for digital nets, digital sequences and also for polynomial lattice point sets. The basic definitions are recalled in the next section. 

\section{Digital nets and sequences, and polynomial lattice point sets}\label{sec:dignetseqplps}

In this section we recall the definition and basic results about the node sets in use. Reader how are already acquainted with the digital construction schemes can jump directly to Section~\ref{sec:errdignet}.

\paragraph{Digital $((t_\uu)_\uu,m,s)$-nets.} Let $\FF_2$ be the finite field of order $2$. We identify $\FF_2$ with the set $\{0,1\}$ equipped with arithmetic operations modulo 2. 

\begin{definition}\rm \label{def:dignet}
Let $s \ge 1$, $m \ge 1$ and $0 \le t \le m$ be integers. Choose
 $m \times m$ matrices $C_1,\ldots ,C_s$ over $\FF_2$ with the following property:

For any non-negative integers $d_1,\ldots ,d_s$ with $d_1+\cdots+d_s=m-t$ the system of the 
\begin{center}
\begin{tabbing}
\hspace*{3cm}\=first \= $d_1\ \ \ $ \= rows of $C_1$, \=  together with the\\
\> first \> $d_2$ \> rows of $C_2$, \=  together with the \\
\>\hspace{1cm}$\vdots$ \\
\> first \> $d_{s-1}$ \> rows of $C_{s-1}$, \=  together with the \\
\>first \> $d_s$ \>rows of $C_s$ 
\end{tabbing}
\end{center}
is linearly independent over $\FF_2$.

Consider the following construction principle for point sets consisting of $2^m$ points in $[0,1)^s$: represent $n \in \{0,1,\ldots,2^m-1\}$ in base 2, say $n=n_0+n_1 2+\cdots +n_{m-1} 2^{m-1}$, and multiply for every $i \in [s]$ the matrix $C_i$ with the vector $\vec{n} = (n_0,\ldots,n_{m-1})^{\top}$ of digits of $n$ in $\FF_2$,
\begin{eqnarray}\label{matrix_vec_net}
C_i \vec{n}=:(y_1^{(i)},\ldots ,y_m^{(i)})^{\top}.
\end{eqnarray} 
Now we set 
\begin{eqnarray}\label{defxni}
x_n^{(i)}:=\frac{y_1^{(i)}}{2}+ \cdots +\frac{y_m^{(i)}}{2^m}
\end{eqnarray}
and
\begin{eqnarray}
\bsx_n = (x_n^{(1)}, \ldots ,x_n^{(s)}).\nonumber
\end{eqnarray}
The point set $\{\bsx_0,\ldots,\bsx_{2^m-1}\}$ is called a {\it digital $(t,m,s)$-net over $\FF_2$} and the matrices $C_1,\ldots,C_s$ are called the {\it generating matrices} of the digital net.
\end{definition}

We remark that explicit constructions for digital $(t,m,s)$-nets are known with some restrictions on the parameter $t$ (the so-called quality parameter $t$ is independent of $m$ but depends on $s$), see for instance \cite{DP10,niesiam} for more information.

It is clear that every projection of a digital $(t,m,s)$-net over $\FF_2$ to coordinates from a set $\uu \subseteq [s]$, $\uu \not=\emptyset$, forms a digital $(t,m,|\uu|)$-net over $\FF_2$, see, e.g. \cite[Section~4.4.3]{DP10}. However, it may happen that the quality parameter $t$ of a projection is smaller than the overall quality parameter $t$ of the full projection. In order to include this possibility into the definition of digital nets we can define a more general form of the quality parameter $t$ in the following way.

\begin{definition}\rm
Let $C_1, \ldots, C_s$ be $m \times m$ matrices over $\mathbb{F}_2$. Then the digital net with generating matrices $C_1, \ldots, C_s$ is a digital $((t_\uu)_{\emptyset \neq \uu \subseteq [s]}, m, s)$-net over $\mathbb{F}_2$ if for all $\emptyset \neq \uu \subseteq [s]$ the matrices $(C_i)_{i \in \uu}$ generate a digital $(t_\uu, m, |\uu|)$-net over $\mathbb{F}_2$.
\end{definition}

A variant of digital nets are shifted digital nets. Here one chooses $(\vec{\sigma}_1,\ldots,\vec{\sigma}_s) \in (\mathbb{F}_2^{\mathbb{N}})^s$ with $\vec{\sigma}_i=(\sigma_1^{(i)},\sigma_2^{(i)},\ldots)^{\top} \in \mathbb{F}_2^{\mathbb{N}}$ with all but finitely many components different from zero and replaces \eqref{defxni} by
\begin{equation*}
x_n^{(i)} := \frac{y_1^{(i)}\oplus \sigma_1^{(i)}}{2} +  \cdots + \frac{y_m^{(i)}\oplus \sigma_m^{(i)}}{2^m} + \sum_{k=m+1}^{\infty}\frac{\sigma_k^{(i)}}{2^k} \in [0,1),
\end{equation*}
where $(y_1^{(i)},\ldots ,y_m^{(i)})$ are given by \eqref{matrix_vec_net}. 

\paragraph{Digital $((t_\uu)_\uu, s)$-sequences.}

Digital sequences are infinite versions of digital nets. 

\begin{definition}\rm
Let $C_1,\ldots, C_s \in \mathbb{F}_2^{\mathbb{N} \times \mathbb{N}}$ be $\mathbb{N} \times \mathbb{N}$ matrices over $\mathbb{F}_2$. For $C_i = (c_{i,k,\ell})_{k, \ell \in \mathbb{N}}$ we assume that for each $\ell \in \mathbb{N}$ there exists a $K(\ell) \in \mathbb{N}$ such that $c_{i,k,\ell} = 0$ for all $k > K(\ell)$. Assume that for every $m \ge t$ the upper left $m \times m$ submatrices $C_1^{(m \times m)},\ldots,C_s^{(m \times m)}$ of $C_1,\ldots,C_s$, respectively, generate a digital $(t,m,s)$-net over $\FF_2$.

Consider the following construction principle for infinite sequences of points in $[0,1)^s$: represent $n \in \NN_0$ in base 2, say $n = n_0 + n_1 2 + \cdots + n_{m-1} 2^{m-1} \in \mathbb{N}_0$,  and define the infinite binary digit vector of $n$ by $\vec{n} = (n_0, n_1, \ldots, n_{m-1}, 0, 0, \ldots )^\top \in \mathbb{F}_2^{\mathbb{N}}$. For $i \in [s]$ compute
\begin{equation}\label{eq_dig_seq}
C_i \vec{n}=:(y_1^{(i)},y_2^{(i)},y_3^{(i)},\ldots)^\top, 
\end{equation}
where the matrix vector product is evaluated over $\mathbb{F}_2$. Now set
\begin{equation*}
x_n^{(i)} =  \frac{y_1^{(i)}}{2}+\frac{y_2^{(i)}}{2^2} + \frac{y_3^{(i)}}{2^3}+\cdots 
\end{equation*}
and $$\bsx_n=(x_n^{(1)},\ldots,x_n^{(s)}).$$
The infinite sequence $(\bsx_n)_{n \ge 0}$ is called a {\it digital $(t,s)$-sequence over $\FF_2$  with generating matrices $C_1,\ldots,C_s$}. 
\end{definition}

In the same way as above for digital nets we can regard a digital $(t,s)$-sequence over $\FF_2$ as a digital $((t_\uu)_{\emptyset \not= \uu \subseteq [s]},s)$-sequence over $\FF_2$.\\

For general properties of digital nets and sequences we refer to the books \cite{DP10,niesiam}.

\paragraph{Polynomial lattices.} Let $\FF_2[x]$ be the set of all polynomials over $\FF_2$ and let $\FF_2((x^{-1}))$ be the field of formal Laurent series consisting of elements $$g=\sum_{k=w}^{\infty} a_k x^{-k}\ \ \ \mbox{ with } a_k \in \FF_2 \ \mbox{ and } \ w \in \ZZ \ \mbox{ with }\ a_w \not=0.$$ For $g \in \FF_2((x^{-1}))$ and $m \in \NN \cup \{\infty\}$ we define the ``fractional part'' function $\FF_2((x^{-1})) \rightarrow [0,1)$ by $$\{g\}_{m}:= \sum_{k=\max(1,w)}^{m} a_k 2^{-k}.$$ 

Polynomial lattice point sets have been first introduced by Niederreiter~\cite{nie92} and can be viewed as polynomial analogs of lattice point sets. They form special instances of digital nets;  see~\cite[Chapter~10]{DP10} or \cite{niesiam} for further information.

\begin{definition}\label{def:plps}\rm
Let $m \in \NN$. Given a polynomial $f \in \FF_2[x]$ with $\deg(f)=m$ and $\bsg=(g_1,\ldots ,g_s) \in \FF_2[x]^s$, a {\it polynomial lattice point set} $\cP(\bsg,f)$ is given by the points $$\bsx_h=\left(\left\{\frac{h g_1}{f}\right\}_{m},\ldots ,\left\{\frac{h g_s}{f}\right\}_{m}\right)\quad \mbox{where $h \in \FF_2[x]$ with $\deg(h)<m$.}$$ QMC rules that use polynomial lattice point sets as underlying nodes are called {\it polynomial lattice rules}. The polynomial $f$ is called the modulus and $\bsg$ the generating vector of the polynomial lattice point set. Note that $|\cP(\bsg,f)|=2^m$.
\end{definition}

It is well-known that $\cP(\bsg,f)$ is projection regular whenever $\gcd(g_i,f)=1$ for all $i \in [s]$ (see, e.g., \cite[Remark~10.3]{DP10}). 

\begin{definition}\rm
The {\it dual net} of the polynomial lattice point set $\cP(\bsg,f)$ from Definition~\ref{def:plps} is defined as $$\cP^{\bot}(\bsg,f)=\{\bsk \in \FF_2[x]^s\, : \, \bsk \cdot \bsg \equiv 0 \pmod{f}\}.$$
\end{definition}

An important property of polynomial lattice point sets is that (see \cite[Lemmas~4.75 and 10.6]{DP10})
\begin{equation}\label{charprop}
\sum_{\bsx \in \cP(\bsg,f)} \wal_{\bsk}(\bsx) = \left\{
\begin{array}{ll}
 2^m & \mbox{ if }\bsk \in \cP^{\bot}(\bsg,f),\\
 0 & \mbox{ otherwise},
\end{array}\right.
\end{equation}
where we identify integers $k \in \NN_0$ with polynomials over $\FF_2$ in the natural way: $$k =\underbrace{\kappa_0+\kappa_1 2+\cdots +\kappa_{r-1} 2^{r-1}}_{\in \NN_0}  \cong  \underbrace{\kappa_0+\kappa_1 x+\cdots +\kappa_{r-1} x^{r-1}}_{\in \FF_2[x]}.$$

\section{Error bound for digital nets}\label{sec:errdignet}

In this section we study the error bound from Theorem~\ref{thm1} for digital nets. The main result of this section is:

\begin{theorem}\label{thm2}
For every digital $((t_\uu)_{\emptyset \not=\uu \subseteq[s]},m,s)$-net $\cP$ over $\FF_2$ with regular generating matrices we have 
\begin{equation*}
{\rm wce}(\cP; p,s,\bsgamma, \varphi)\le \frac{3}{2^m}\,\left(\sum_{\emptyset \neq \uu \subseteq [s]}  \left(\gamma_\uu \, 2^{|\uu|+t_\uu} \, m^{|\uu|}\,  \prod_{i \in \uu} (b_i - a_i)\right)^q \right)^{1/q},
\end{equation*}
where $1 \le p, q \le \infty$ with $\frac{1}{p}+\frac{1}{q}=1$, and with the obvious modifications if $q = \infty$.
\end{theorem}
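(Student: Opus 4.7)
The plan is to invoke Theorem~\ref{thm1} and then bound the quantity $E(\cP,\uu)$ for a digital net. First note that regularity of the generating matrices $C_1,\ldots,C_s$ forces $\cP$ to be projection regular, so Theorem~\ref{thm1} is applicable. For any $\bsk_\uu = (k_i)_{i\in\uu}$ with $k_i \in \{0,\ldots,2^m-1\}$ and associated binary-digit vectors $\vec{k}_i \in \FF_2^m$, the standard Walsh-character identity $\wal_{\bsk_\uu}(\bsy_{n,\uu}) = (-1)^{\vec{n}^\top \sum_{i\in\uu} C_i^\top \vec{k}_i}$ together with summation over $\vec{n} \in \FF_2^m$ gives
\begin{equation*}
\frac{1}{2^m}\sum_{n=0}^{2^m-1}\wal_{\bsk_\uu}(\bsy_{n,\uu}) = \begin{cases} 1 & \mbox{if } \sum_{i\in\uu} C_i^\top \vec{k}_i = \bszero, \\ 0 & \mbox{otherwise.}\end{cases}
\end{equation*}
Denoting the set of $\bsk_\uu$ satisfying this linear condition by $\cP^{\bot}_\uu$, one therefore obtains $E(\cP,\uu) = \sum_{\bsk_\uu \in \cP^{\bot}_\uu \cap \{1,\ldots,2^m-1\}^{|\uu|}} 2^{-\mu(\bsk_\uu)}$.

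Next I reparametrize: each $k_i \in \{1,\ldots,2^m-1\}$ writes uniquely as $k_i = k'_i + 2^{j_i}$ with $j_i := \mu(k_i)-1 \in \{0,\ldots,m-1\}$ and $k'_i \in D_{j_i}$, so $\mu(\bsk_\uu) = |\bsj_\uu|+|\uu|$. This yields
\begin{equation*}
E(\cP,\uu) = \sum_{\bsj_\uu \in \{0,\ldots,m-1\}^{|\uu|}} \frac{N(\bsj_\uu)}{2^{|\bsj_\uu|+|\uu|}},
\end{equation*}
where $N(\bsj_\uu)$ counts the $\bsk'_\uu \in D_{\bsj_\uu}$ with $\bsk'_\uu + 2^{\bsj_\uu} \in \cP^{\bot}_\uu$. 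The membership condition is a system of $m$ linear equations in $|\bsj_\uu|$ binary unknowns whose coefficient matrix collects the first $j_i$ rows of $C_i$ for $i \in \uu$. By the $(t_\uu,m,|\uu|)$-net property any such collection of rows of total size at most $m-t_\uu$ is linearly independent, so the matrix has rank at least $\min(|\bsj_\uu|,m-t_\uu)$. Hence $N(\bsj_\uu) \le 2^{\max(0,\,|\bsj_\uu|-m+t_\uu)}$, and moreover $N(\bsj_\uu) = 0$ whenever $|\bsj_\uu|+|\uu| \le m-t_\uu$ (every non-zero element of $\cP^{\bot}_\uu$ has $\mu \ge m-t_\uu+1$).

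The remaining step is to sum, splitting along $|\bsj_\uu| \ge m-t_\uu$ (each summand equal to $2^{t_\uu-m-|\uu|}$) and $m-t_\uu+1-|\uu| \le |\bsj_\uu| \le m-t_\uu-1$ (each summand at most $2^{-|\bsj_\uu|-|\uu|}$, using $N \le 1$). The first range contributes at most $2^{t_\uu-m-|\uu|} m^{|\uu|}$, by the trivial count $\#\{\bsj_\uu\} \le m^{|\uu|}$; the second spans only $|\uu|-1$ values of $|\bsj_\uu|$ and contributes at most a constant multiple of $2^{t_\uu-m}\cdot 2^{|\uu|} m^{|\uu|-1}$ after invoking the composition count $\binom{|\bsj_\uu|+|\uu|-1}{|\uu|-1}$ together with a short geometric sum. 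Both contributions are bounded above by $2\cdot 2^{|\uu|+t_\uu}\,m^{|\uu|}/2^m$. Combining with the trivial estimate $2^{|\uu|}/2^m \le 2^{|\uu|+t_\uu}\,m^{|\uu|}/2^m$ inside the parentheses of Theorem~\ref{thm1} yields the prefactor $3$ and the claim. The main obstacle is carrying out the case-split cleanly, since the dual-net constraint $|\bsj_\uu|+|\uu| \ge m-t_\uu+1$ partially overlaps with the full-rank range $|\bsj_\uu| \le m-t_\uu$; the Walsh-character and rank arguments themselves are standard consequences of the definition of a digital $(t_\uu,m,|\uu|)$-net.
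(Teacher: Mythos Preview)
Your approach is essentially the same as the paper's: apply Theorem~\ref{thm1}, reduce $E(\cP,\uu)$ to a dual-net sum via the Walsh character identity, reparametrize by the leading-bit positions $\bsj_\uu$, bound the solution count $N(\bsj_\uu)$ using the rank guarantee from the $(t_\uu,m,|\uu|)$-net property, and split the resulting sum according to whether $|\bsj_\uu|$ lies above or below $m-t_\uu$. The paper packages this as a separate lemma giving $E(\cP,\uu)\le 2^{|\uu|+t_\uu+1}m^{|\uu|}/2^m$, and then combines with Theorem~\ref{thm1} exactly as you do to produce the factor $3$.

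Two small points you should tighten. First, your second range tacitly assumes $m-t_\uu\ge|\uu|-1$; when $m-t_\uu<|\uu|-1$ the range starts at $0$ rather than $m-t_\uu-|\uu|+1$, and you need a separate (trivial) estimate such as $\sum_{\ell\ge 0}\binom{\ell+|\uu|-1}{|\uu|-1}2^{-\ell-|\uu|}=1<2^{|\uu|-1}2^{t_\uu}/2^m$ in that regime. Second, the phrase ``a short geometric sum'' is slightly misleading since the summands $\binom{\ell+|\uu|-1}{|\uu|-1}2^{-\ell}$ are not geometric; what you actually need is the standard tail bound $\sum_{\ell\ge L}\binom{\ell+k-1}{k-1}2^{-\ell}\le 2^{-L}\binom{L+k-1}{k-1}2^{k}$ (which the paper also invokes), after which your claimed bound with constant $\tfrac12$ follows directly.
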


The proof of this result relies on Theorem~\ref{thm1} and an estimate of the quantities $E(\cP,\uu)$ for digital nets. To this end, let $\cP=\{\bsy_0,\bsy_1,\ldots,\bsy_{2^m-1}\}$ be a digital $((t_\uu)_\uu,m,s)$-net over $\FF_2$ with generating matrices $C_1,\ldots,C_s$. Hence $N=|\cP|=2^m$. Throughout we assume that the generating matrices $C_i$, $i \in [s]$, are regular. This is no big restriction but it guarantees that the digital net $\cP$ is projection regular and therefore Theorem~\ref{thm1} applies. 

As is well known (see \cite[Lemma~2]{DP05a}), we have 
$$\frac{1}{2^m} \sum_{n=0}^{2^m-1} \mathrm{wal}_{\bsk_\uu}(\bsy_{n,\uu}) = 1$$ if $\bsk_\uu=(k_i)_{i \in \uu}$, where $\uu \subseteq [s]$, belongs to the dual net, i.e., if  $$\sum_{i \in \uu} C_i^{\top} \vec{k}_i=\vec{0}.$$ In all other cases the above Walsh sum equals zero. Hence the quantity $E(\cP,\uu)$ from \eqref{def:epu} boils down to $$E(\cP,\uu)=\sum_{\bsk_\uu \in \{1, \ldots, 2^m-1\}^{|\uu|} \atop \sum_{i \in \uu} C_i^{\top} \vec{k}_i =\vec{0}} \frac{1}{2^{\mu(\bsk_\uu)}}.$$
Now we estimate $E(\cP,\uu)$.  

\begin{lemma}\label{le:Ebound}
Let $\emptyset \neq \uu \subseteq [s]$ and let $C_i$, $i \in \uu$, be the generating matrices of a digital
$(t_\uu,m,s)$-net $\cP$ over $\FF_2$. Then we have
$$E(\cP,\uu) \le  2^{|\uu|+t_\uu+1} \, \frac{m^{|\uu|}}{2^m}.$$
\end{lemma}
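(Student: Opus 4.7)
The plan is to combine the formula
\[
E(\cP, \uu) = \sum_{\substack{\bsk_\uu \in \{1, \ldots, 2^m-1\}^{|\uu|}\\ \sum_{i \in \uu} C_i^{\top} \vec{k}_i = \vec{0}}} \frac{1}{2^{\mu(\bsk_\uu)}},
\]
derived just before the statement of the lemma, with a careful counting of dual-net elements sorted by the digit-length profile $\vec{r} = (r_i)_{i \in \uu}$, where $r_i := \mu(k_i) \in \{1,\ldots,m\}$. Setting $R := r_1 + \cdots + r_{|\uu|} = \mu(\bsk_\uu)$, I will write
\[
E(\cP,\uu) = \sum_{\vec{r}} \frac{N(\vec{r})}{2^R},
\]
where $N(\vec{r})$ counts the $\bsk_\uu \in \cP_\uu^{\perp}$ with $\mu(k_i) = r_i$ for each $i \in \uu$, and then bound $N(\vec{r})$ uniformly.

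To bound $N(\vec{r})$, let $\boldsymbol{c}_{i,\ell}$ denote the $\ell$-th row of $C_i$. Because $\mu(k_i) = r_i$, the binary digit vector $\vec{k}_i$ has its $r_i$-th entry equal to $1$, its entries in positions above $r_i$ equal to $0$, and the first $r_i - 1$ entries free. The dual-net condition $\sum_i C_i^{\top} \vec{k}_i = \vec{0}$ therefore reads
\[
\sum_{i\in \uu} \boldsymbol{c}_{i,r_i}^{\top} + \sum_{i \in \uu}\sum_{\ell=1}^{r_i - 1}(\vec{k}_i)_\ell\, \boldsymbol{c}_{i,\ell}^{\top} = \vec{0},
\]
an affine system over $\FF_2$ in the $R - |\uu|$ free digits, whose coefficient matrix $A$ has the $\boldsymbol{c}_{i,\ell}^{\top}$ ($i\in\uu$, $1 \le \ell < r_i$) as columns.

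The $((t_\uu)_\uu,m,s)$-net hypothesis then enters via the fact that, whenever $d_1 + \cdots + d_{|\uu|} \le m - t_\uu$, the first $d_i$ rows of $(C_i)_{i \in \uu}$ are jointly linearly independent in $\FF_2^m$. I apply this in two ways. If $R \le m - t_\uu$, the rows $\boldsymbol{c}_{i, r_i}$ appearing with forced coefficient $1$ cannot be cancelled by any combination of strictly lower rows, so $N(\vec{r}) = 0$. If $R > m - t_\uu$, choosing $d_i \le r_i - 1$ with $\sum_i d_i = \min(R - |\uu|, m - t_\uu)$ shows that $\mathrm{rank}(A) \ge \min(R-|\uu|,\, m - t_\uu)$, and consequently
\[
N(\vec{r}) \le 2^{\max\{0,\; R - |\uu| - m + t_\uu\}}.
\]

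Combining these estimates, each admissible $\vec{r}$ contributes at most $2^{t_\uu - m - 1}$ to $E(\cP,\uu)$: in the regime $m - t_\uu < R \le m - t_\uu + |\uu|$ one has $N(\vec{r})/2^R \le 2^{-R} \le 2^{t_\uu - m - 1}$, and in the regime $R > m - t_\uu + |\uu|$ one has $N(\vec{r})/2^R \le 2^{t_\uu - m - |\uu|} \le 2^{t_\uu - m - 1}$ (using $|\uu| \ge 1$). Summing over the at most $m^{|\uu|}$ admissible tuples $\vec{r} \in \{1,\ldots,m\}^{|\uu|}$ yields
\[
E(\cP,\uu) \;\le\; m^{|\uu|}\, 2^{t_\uu - m - 1},
\]
which is strictly stronger than the stated bound (the slack is a factor of $2^{|\uu|+2}$). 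The main technical point is the rank bound on $A$, which requires matching the linear-independence statement built into the definition of a $((t_\uu)_\uu,m,s)$-net to the algebraic structure of the system that defines $\cP_\uu^{\perp}$.
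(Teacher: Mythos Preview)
Your proof is correct and in fact yields a sharper bound than the lemma claims. Both your argument and the paper's proof start from the same formula for $E(\cP,\uu)$ and partition the dual-net elements according to the digit-length profile $(\mu(k_i))_{i\in\uu}$; the counts you call $N(\vec r)$ are exactly the quantities $\Sigma(j_1,\ldots,j_s)$ in the paper (with $j_i = r_i-1$), and your bounds $N(\vec r)=0$ for $R\le m-t_\uu$ and $N(\vec r)\le 2^{\max\{0,R-|\uu|-m+t_\uu\}}$ otherwise coincide with the bounds the paper imports from \cite{DP05b}. The difference lies in the summation step: the paper splits into two regimes and estimates each via binomial-coefficient identities and Stirling-type inequalities, eventually arriving at $2^{|\uu|+t_\uu+1}m^{|\uu|}/2^m$, whereas you observe that every nonzero term $N(\vec r)/2^R$ is uniformly at most $2^{t_\uu-m-1}$ and simply multiply by the $m^{|\uu|}$ profiles. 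Your route is shorter, avoids the case distinction $m-t\gtrless |\uu|-1$, and saves a factor $2^{|\uu|+2}$; the paper's more elaborate summation gains nothing here. A minor bonus of your write-up is that the rank argument for $N(\vec r)$ is self-contained rather than cited.
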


\begin{proof} 
To simplify the notation we show the result only for $\uu = [s]$. The other cases follow by the same arguments. We have
\begin{eqnarray*}
E(\cP,[s]) & = & \frac{1}{2^s}\sum_{j_1,\ldots
,j_s=0}^{m-1}\frac{1}{2^{j_1+\cdots+j_s}}
\underbrace{\sum_{k_1=2^{j_1}}^{2^{j_1+1}-1}\ldots \sum_{k_s=2^{j_s}}^{2^{j_s+1}-1}}_{C_1^{\top}\vec{k}_1+\cdots +C_s^{\top}
\vec{k}_s=\vec{0}}1.
\end{eqnarray*}
For 
\begin{eqnarray}\label{cond1}
\Sigma(j_1,\ldots,j_s):=\underbrace{\sum_{k_1=2^{j_1}}^{2^{j_1+1}-1}\ldots \sum_{k_s=2^{j_s}}^{2^{j_s+1}-1}}_{C_1^{\top}\vec{k}_1+\cdots +C_s^{\top}
\vec{k}_s=\vec{0}}1
\end{eqnarray}
we know from \cite[Proof of Lemma~7]{DP05b} that $\Sigma(j_1,\ldots,j_s)=0$ if $j_1+\cdots +j_s \le m-t-s$. If 
\begin{eqnarray}\label{bed1}
j_1+\cdots +j_s \ge m-t-s+1,
\end{eqnarray}
then
$$\Sigma(j_1,\ldots,j_s) \le \left\{
\begin{array}{ll}
1 & \mbox{ if } j_1+\cdots +j_s \le m-t,\\ 
2^{j_1+\cdots+j_s-m+t} & \mbox{ if } j_1+\cdots +j_s > m-t.
\end{array}
\right.$$

Together with condition \eqref{bed1} we obtain
\begin{eqnarray}\label{sum12}
E(\cP,[s]) & \le & \frac{1}{2^s} \sum_{j_1,\ldots ,j_s=0 \atop m-t-s+1 \le j_1+\cdots
+j_s \le m-t}^{m-1} \frac{1}{2^{j_1+\cdots +j_s}}  \nonumber \\
& & + \frac{1}{2^s}\sum_{j_1,\ldots ,j_s=0 \atop  j_1+\cdots
+j_s > m-t}^{m-1} \frac{1}{2^{j_1+\cdots +j_s}} 2^{j_1+\cdots
+j_s-m+t} \nonumber \\
&=: & \Sigma_{1} +  \Sigma_{2}. 
\end{eqnarray}
Now we have to estimate the sums $\Sigma_{1}$ and
$\Sigma_{2}$. First we have
\begin{equation*}
\Sigma_{2} = \frac{1}{2^s} \frac{2^t}{2^m} \sum_{l=m-t+1}^{s
(m-1)} \sum_{j_1,\ldots ,j_s=0\atop j_1+\cdots
+j_s=l}^{m-1}1 =  \frac{1}{2^s} \frac{2^t}{2^m}
\sum_{l=m-t+1}^{s(m-1)} {l+s-1 \choose s-1}, 
\end{equation*}
where we used the fact that for fixed $l$ the number of non-negative integer
solutions of $j_1+\cdots +j_s=l$ is given by ${l+s-1 \choose
s-1}$. Using the formula $$\sum_{k=0}^m {n+k \choose n}= {n+m+1 \choose n+1}$$ we obtain $$\sum_{l=m-t+1}^{s(m-1)} {l+s-1 \choose s-1} = {sm \choose s} - {m+s-t \choose s}$$ and hence $$\Sigma_2 = \frac{1}{2^s} \frac{2^t}{2^m} \left[ {sm \choose s} - {m+s-t \choose s}\right].$$ We have $${sm \choose s} - {m+s-t \choose s} \le {sm \choose s} =\frac{sm(sm-1)\cdots (sm-(s-1))}{s!} \le \frac{s^s}{s!} m^s.$$ Hence  $$\Sigma_2 \le 2^t \frac{s^s}{2^s s!} \frac{m^s}{2^m}.$$

Now we estimate $\Sigma_{1}$. If $m-t \geq s-1$ we proceed similar to above and obtain 
\begin{eqnarray}\label{sum2_asym}
\Sigma_{1} & = & \frac{1}{2^s} \sum_{j_1,\ldots ,j_s=0 \atop m-t-s+1 \le j_1+\cdots
+j_s \le m-t}^{m-1} \frac{1}{2^{j_1+\cdots +j_s}} \nonumber \\
& = & \frac{1}{2^s} \sum_{l=m-t-s+1}^{m-t} {l+s-1 \choose
s-1} \frac{1}{2^l} \nonumber\\
& \le & \frac{1}{2^s}\frac{1}{2^{m-t-s+1}}{ m-t \choose
s-1} \left(\frac{1}{2}\right)^{-s} \nonumber\\
& \le & 2^{s-1} \frac{2^t}{2^m}  \frac{(m-t)^{s-1}}{(s-1)!}.
\end{eqnarray}
Here we used the estimate $$\sum_{t=t_0}^{\infty} {t+k-1 \choose k-1} \frac{1}{b^t} \le \frac{1}{b^{t_0}} {t_0+k-1 \choose k-1} \left(1-\frac{1}{b}\right)^{-k}$$ for integers $b>1$ and $k,t_0>0$ (see, e.g., \cite{DP05b}).

For this case ($m-t \ge s-1$) we obtain
\begin{eqnarray}\label{Ebound}
E(\cP,[s]) & \le & 2^{s-1}\, \frac{2^t}{2^m}  \frac{(m-t)^{s-1}}{(s-1)!} + 2^t\, \frac{s^s}{2^s s!} \frac{m^s}{2^m} \nonumber \\
& \leq & 2^t\, \frac{m^s}{2^m} \left( \left(\frac{2 {\rm e}}{s-1}\right)^{s-1} + \left(\frac{{\rm e}}{2}\right)^s \right) \nonumber \\
& \le &  2^{s+t+1}\, \frac{m^s}{2^m}, 
\end{eqnarray}
where we used that $$\left(\frac{2 {\rm e}}{s-1}\right)^{s-1} + \left(\frac{{\rm e}}{2}\right)^s \le 2^{s+1}$$ for  $s>1$. It can be easily checked that the bound \eqref{Ebound} also holds true also for $s=1$. 

Now we consider the case where $m-t < s-1$. We have
\begin{eqnarray}\label{sum2_asym2}
\Sigma_{1} &=& \frac{1}{2^s} \sum_{l=0}^{m-t} {l+s-1 \choose s-1} \frac{1}{2^l}  \leq  \frac{1}{2^s} \sum_{l=0}^\infty {l+s-1 \choose s-1} \frac{1}{2^l}  =   1< 2^{s-1} \frac{2^t}{2^m}.\nonumber
\end{eqnarray}
Thus we obtain
\begin{eqnarray*}
        E(\cP,[s])
& \le &  2^{s-1}\, \frac{2^t}{2^m} + 2^t \, \frac{s^s}{2^s s!} \frac{m^s}{2^m}\\
& = &  2^t\, \frac{m^s}{2^m} \left(\frac{2^{s-1}}{m^s}+\frac{s^s}{2^s s!}\right),\\
& \le &  2^{s+t-1} \frac{m^s}{2^m},
\end{eqnarray*}
where we used similar arguments as above. This finishes the proof.
\end{proof}

Theorem~\ref{thm2} follows from combining Theorem~\ref{thm1} and Lemma~\ref{le:Ebound}.

\section{Error bound for digital sequences}\label{sec:EbdDS}

In this section we study the error bound from Theorem~\ref{thm1} for digital sequences. Throughout we restrict ourselves to non-singular upper triangular generating matrices $C_1,\ldots,C_s$. Our main result of this section is:

\begin{theorem}\label{thm3}
Let $\cP_N$ be the initial segment of a digital $((t_\uu)_{\emptyset\not=\uu \subseteq [s]},s)$-sequence over $\FF_2$ consisting of the first $N$ terms. Assume that the generating matrices $C_1,\ldots,C_s$ are non-singular upper triangular matrices. Then for any $N \in \mathbb{N}$ with $N\ge 2$ we have
\begin{equation*}
{\rm wce}(\cP_N; p,s,\bsgamma, \varphi)  \le \frac{1}{N} \frac{3 \log (2N)}{\log 2}   \left(\sum_{\emptyset \neq \uu \subseteq [s]}  \left(\gamma_\uu \, 2^{t_\uu} \, (3 \log N)^{|\uu|}\,  \prod_{i \in \uu} (b_i - a_i)\right)^q \right)^{1/q}.
\end{equation*}
\end{theorem}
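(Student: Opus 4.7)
The idea is to reduce Theorem~\ref{thm3} to Theorem~\ref{thm2} by a standard dyadic decomposition of the initial segment, using the upper triangular hypothesis to identify each block as a digitally shifted digital net with the same generating matrices. Write $N$ in binary as $N = 2^{m_1} + 2^{m_2} + \cdots + 2^{m_r}$ with $m_1 > m_2 > \cdots > m_r \ge 0$, so that $m_1 \le \lfloor \log_2 N \rfloor$ and $r \le \lfloor \log_2 N \rfloor + 1 \le \log(2N)/\log 2$. Set $N_0 = 0$, $N_k = 2^{m_1} + \cdots + 2^{m_k}$, and partition $\cP_N = \bigcup_{k=1}^r \cP_N^{(k)}$ with $\cP_N^{(k)} = \{\bsy_{N_{k-1}},\ldots,\bsy_{N_k-1}\}$ of cardinality $2^{m_k}$. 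The integration error splits cleanly as
$$
{\rm err}(F;\varphi;\cP_N) = \sum_{k=1}^r \frac{2^{m_k}}{N}\, {\rm err}(F;\varphi;\cP_N^{(k)}).
$$

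The structural claim driving the argument is that each $\cP_N^{(k)}$ is a digitally shifted digital $((t_\uu)_{\uu}, m_k, s)$-net over $\FF_2$ whose generating matrices are the upper-left $m_k \times m_k$ submatrices of $C_1,\ldots,C_s$. Writing $n = N_{k-1} + h$ with $h \in [0, 2^{m_k})$, the nonzero bit positions of $N_{k-1}$ (at least $m_{k-1} > m_k$) and of $h$ (at most $m_k-1$) are disjoint, so $\vec{n} = \vec{N_{k-1}} \oplus \vec{h}$ in $\FF_2^{\NN}$ and hence $C_i \vec{n} = C_i \vec{N_{k-1}} \oplus C_i \vec{h}$. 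The first summand is a fixed digital shift $\vec{\sigma}^{(i)}$, and — this is where the upper triangular hypothesis is used — $C_i \vec{h}$ has nonzero entries only in rows $1,\ldots,m_k$ and agrees with the image of $\vec{h}$ under the upper-left $m_k \times m_k$ submatrix of $C_i$. The inherited quality parameter is $t_\uu$ (for $m_k \ge t_\uu$; if $m_k < t_\uu$ one instead uses the trivial bound $\tilde t_\uu = m_k$, which only contributes to the logarithmic factor below).

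Theorem~\ref{thm2} extends without change to digitally shifted nets: projection regularity is preserved by a digital shift, and the only property of $\cP$ entering the proof is $E(\cP,\uu)$, in which the Walsh sum factor $|\frac{1}{2^{m_k}} \sum_n \wal_{\bsk_\uu}(\bsy_{n,\uu}^{(k)})|$ is unchanged upon shifting because a digital shift contributes a character of modulus one. Applying Theorem~\ref{thm2} to each block and using the triangle inequality thus yields
$$
|{\rm err}(F;\varphi;\cP_N)| \le \|F\|_{p,s,\bsgamma}\,\frac{3}{N}\sum_{k=1}^{r}\Bigl(\sum_{\emptyset\neq\uu\subseteq[s]}\bigl(\gamma_\uu\,2^{|\uu|+t_\uu}\,m_k^{|\uu|}\prod_{i\in\uu}(b_i-a_i)\bigr)^{q}\Bigr)^{1/q}.
$$

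Finally I would absorb the constants. Using $m_k \le m_1 \le \log_2 N$ for every $k$ bounds each bracketed quantity by the corresponding expression with $(\log_2 N)^{|\uu|}$ in place of $m_k^{|\uu|}$; summing over $k$ contributes the factor $r \le \log(2N)/\log 2$; and the identity $2^{|\uu|}(\log_2 N)^{|\uu|} = (2\log_2 N)^{|\uu|} \le (3\log N)^{|\uu|}$, which holds for $N \ge 2$ since $2/\log 2 < 3$, absorbs $2^{|\uu|}$ into $(3\log N)^{|\uu|}$ and matches the stated bound. The one nontrivial step is the second paragraph above — making sure that the upper triangular hypothesis really does express $\cP_N^{(k)}$ as a shifted copy of a digital $(t_\uu, m_k, |\uu|)$-net and that the shift neither spoils projection regularity nor alters the relevant Walsh sums in absolute value; once this is in place, the rest is bookkeeping.
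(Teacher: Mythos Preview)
Your proof is correct and follows essentially the same route as the paper: binary decomposition of $N$, identification of each dyadic block as a digitally shifted $((t_\uu)_\uu,m_k,s)$-net via the upper-triangular structure, application of Theorem~\ref{thm2} to each block, and the same final estimates $m_k\le \log_2 N$, $r\le \log_2(2N)$, $2/\log 2<3$. You are in fact slightly more careful than the paper in two places --- you explicitly justify why a digital shift leaves $E(\cP,\uu)$ and projection regularity intact, and you handle the edge case $m_k<t_\uu$ --- both of which the paper passes over with a parenthetical remark.
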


\begin{proof}
Let $N \in \NN$ be of the form $N=2^{m_1}+2^{m_2}+\cdots+2^{m_r}$ with integers $m_r> \ldots > m_2 >m_1 \ge 0$. Let $\cP_N=\{\bsy_0,\bsy_1,\ldots,\bsy_{N-1}\}$ be the initial segment of a digital sequence and let, for $j \in [r]$, $$\cQ_j=\{\bsy_{2^{m_1}+\cdots+2^{m_{j-1}}},\bsy_{2^{m_1}+\cdots+2^{m_{j-1}}+1},\ldots,\bsy_{2^{m_1}+\cdots+2^{m_{j}}-1}\}.$$ 
Then $$\cP_N=\bigcup_{j=1}^r \cQ_j.$$ Now 
\begin{eqnarray*}
|{\rm err}(F; \varphi; \cP_N)| & = & \left|\frac{1}{N}\sum_{n=0}^{N-1}F(\Phi^{-1}(\bsy_n))- \int_{[\bsa, \bsb]} F(\bsx) \varphi(\bsx)\rd \bsx \right|\\
& = & \left|\sum_{j=1}^r \frac{2^{m_j}}{N} \frac{1}{2^{m_j}} \sum_{n=0}^{2^{m_j}-1}F(\Phi^{-1}(\bsy_{2^{m_1}+\cdots+2^{m_{j-1}}+n}))- \sum_{j=1}^r \frac{2^{m_j}}{N} \int_{[\bsa, \bsb]} F(\bsx) \varphi(\bsx)\rd \bsx \right|\\
& \le & \sum_{j=1}^r \frac{2^{m_j}}{N}  \left|\frac{1}{2^{m_j}} \sum_{n=0}^{2^{m_j}-1}F(\Phi^{-1}(\bsy_{2^{m_1}+\cdots+2^{m_{j-1}}+n}))-  \int_{[\bsa, \bsb]} F(\bsx) \varphi(\bsx)\rd \bsx \right|\\
& = & \sum_{j=1}^r \frac{2^{m_j}}{N} |{\rm err}(F; \varphi; \cQ_j)|.
\end{eqnarray*}
From this we immediately obtain $${\rm wce}(\cP_N; p,s,\bsgamma, \varphi) \le \sum_{j=1}^r \frac{2^{m_j}}{N} {\rm wce}(\cQ_j; p,s,\bsgamma, \varphi).$$

Now we claim that the point sets $\cQ_j$, $j \in [r]$, are shifted digital $((t_\uu)_{\emptyset \not=\uu \subseteq [s]},m_j,s)$-nets over $\FF_2$.

Indeed, let $C_1,\ldots,C_s \in \FF_2^{\NN \times \NN}$ be the generating matrices of the digital sequence. According to our standing assumption these matrices are non-singular upper triangular matrices. For $i \in [s]$ the matrix $C_i$ is of the form
$$
C_i = \left( \begin{array}{ccc}
           & \vline &  \\
  C_i^{(m_j \times m_j)} & \vline & D_i^{(m_j \times \mathbb{N})} \\
           & \vline &   \\ \hline
    & \vline & \\
 0^{(\mathbb{N} \times m_j)}  & \vline &  V_i^{(\mathbb{N} \times \mathbb{N})} \\
   &   \vline       &
\end{array} \right) \in \mathbb{F}_2^{\mathbb{N} \times \mathbb{N}},
$$ 
where $C_i^{(m_j \times m_j)}$ is the upper-left $m_j \times m_j$ sub-matrix of $C_i$, $0^{(\mathbb{N} \times m_j)}$ denotes the $\mathbb{N} \times m_j$ zero matrix. Furthermore, also the matrix $V_i^{(\mathbb{N} \times \mathbb{N})}$ is non-singular upper triangular like $C_i$. 

Any integer $k \in \{2^{m_1}+\cdots + 2^{m_{j -1}}, 2^{m_1}+\cdots + 2^{m_{j-1}} + 1, \ldots ,2^{m_1} + \cdots + 2^{m_j}-1\}$ can be written in the form $$k=2 ^{m_1}+\cdots+2^{m_{j-1}}+a=2^{m_{j-1}} \ell +a$$ with $a \in \{0,1,\ldots,2^{m_j}-1\}$ and $\ell=1+2^{m_{j}-m_{j-1}}+\cdots+2^{m_1-m_{j-1}}$ if $j > 1$ and $\ell =0$ if $j=1$. Hence the dyadic digit vector of $k$ is given by $$\vec{k}=(a_0,a_1,\ldots,a_{m_j-1},l_0,l_1,l_2,\ldots)^{\top}=:{\vec{a} \choose \vec{\ell}},$$ where $a_0,\ldots,a_{m_j-1}$ are the binary digits of $a$ and $l_0,l_1,l_2,\ldots$ are the dyadic digits of $\ell$. Note that the elements of the sequence $l_0,l_1,l_2,\ldots$ become eventually zero. With this notation and with the above decomposition of the matrix $C_i$  we have
$$C_i \vec{k}=\left( \begin{array}{c} C_i^{(m_j \times m_j)}  \vec{a} \\ 0 \\ 0 \\ \vdots \end{array} \right)  + \left( \begin{array}{c}
             \\
  D_i^{(m_j \times \mathbb{N})}  \\
          \\ \hline  \\
 V_i^{(\mathbb{N} \times \mathbb{N})}  \\
   \end{array} \right) \vec{\ell}.$$ 
For the point set $\cQ_j$ under consideration, the vector
\begin{equation}\label{dig_shift}
\vec{\sigma}_{j,i}:=\left( \begin{array}{c}
             \\
  D_i^{(m_j \times \mathbb{N})}  \\
          \\ \hline  \\
 V_i^{(\mathbb{N} \times \mathbb{N})}  \\
   \end{array} \right) \vec{\ell}
\end{equation}
is constant and its components become eventually zero (i.e., only a finite number of components is nonzero). Furthermore, $C_i^{(m_j \times m_j)}  \vec{a}$ for $a=0,1,\ldots,2^{m_j}-1$ and $i \in [s]$ generate a digital $(t,m_j,s)$-net over $\mathbb{F}_2$.

This means that the point set $\cQ_j$ is a digitally shifted $((t_\uu)_{\emptyset \not=\uu \subseteq[s]},m_j,s)$-net over $\mathbb{F}_2$ with generating matrices $C_1^{(m_j \times m_j)},\ldots, C_s^{(m_j \times m_j)}$ and hence the claim is proven.\\

Using Theorem~\ref{thm1} and Theorem~\ref{thm2} (the result also holds for digitally shifted digital nets) we obtain
\begin{equation*}
{\rm wce}(\cQ_j; p,s,\bsgamma, \varphi) \le \frac{3}{2^{m_j}}\,\left(\sum_{\emptyset \neq \uu \subseteq [s]}  \left(\gamma_\uu \, 2^{|\uu|+t_\uu} \, m_j^{|\uu|}\,  \prod_{i \in \uu} (b_i - a_i)\right)^q \right)^{1/q}
\end{equation*}
and therefore 
\begin{equation*}
{\rm wce}(\cP_N; p,s,\bsgamma, \varphi) \le \frac{3}{N}\sum_{j=1}^r \left(\sum_{\emptyset \neq \uu \subseteq [s]}  \left(\gamma_\uu \, 2^{|\uu|+t_\uu} \, m_j^{|\uu|}\,  \prod_{i \in \uu} (b_i - a_i)\right)^q \right)^{1/q}.
\end{equation*}
For $j \in [r]$ we have $2^{m_j}\le 2^{m_1}+2^{m_2}+\cdots + 2^{m_r}=N$ and this implies $m_j \le (\log N)/\log 2$. Furthermore, because of $m_r > \ldots > m_2 >m_1\ge 0$, we have $m_r \ge r-1$ and hence $r \le 1+ (\log N)/(\log 2) =\log(2N)/\log 2$. Hence we obtain
\begin{eqnarray*}
{\rm wce}(\cP_N; p,s,\bsgamma, \varphi) & \le & \frac{3 r}{N}  \left(\sum_{\emptyset \neq \uu \subseteq [s]}  \left(\gamma_\uu \, 2^{|\uu|+t_\uu} \, \left(\frac{\log N}{\log 2}\right)^{|\uu|}\,  \prod_{i \in \uu} (b_i - a_i)\right)^q \right)^{1/q}\\
& \le & \frac{1}{N} \frac{3 \log (2N)}{\log 2}   \left(\sum_{\emptyset \neq \uu \subseteq [s]}  \left(\gamma_\uu \, 2^{t_\uu} \, (3 \log N)^{|\uu|}\,  \prod_{i \in \uu} (b_i - a_i)\right)^q \right)^{1/q},
\end{eqnarray*}
where we estimated $2/\log 2=2.8853\ldots < 3$ in order to make the result a bit easier to state.
\end{proof}

\section{Error bound for polynomial lattice point sets}\label{sec:PLPS}

In this section we analyze the worst-case error of polynomial lattice points $\cP(\bsg,f)$ as introduced in Definition~\ref{def:plps}. We restrict our analysis to the case $p=\infty$ and hence $q=1$. Then Theorem~\ref{thm1} yields
\begin{equation}\label{wce:estPLPS}
{\rm wce}(\cP; \infty,s,\bsgamma, \varphi)\le \frac{1}{2^m} \sum_{\emptyset \neq \uu \subseteq [s]}  \gamma_\uu \, 2^{|\uu|}\, \prod_{i \in \uu} (b_i - a_i) + \sum_{\emptyset \neq \uu \subseteq [s]}  \gamma_\uu \, \prod_{i \in \uu} (b_i - a_i) \,  E(\cP,\uu).
\end{equation}
For a polynomial lattice point set $\cP(\bsg,f)$ we use $$B_{\bsgamma}(\bsg,f) := \sum_{\emptyset \neq \uu \subseteq [s]}  \gamma_\uu \, \prod_{i \in \uu} (b_i - a_i) \,  E(\cP(\bsg,f),\uu)$$
 as figure-of-merit.
 
According to \eqref{charprop} we have $$\frac{1}{2^m} \sum_{n=0}^{2^m-1} \mathrm{wal}_{\bsk_\uu}(\bsy_{n,\uu}) =\left\{ 
\begin{array}{ll}
1 & \mbox{ if } \  \bsg_\uu \cdot \bsk_\uu \equiv 0 \pmod{f},\\
0 & \mbox{ otherwise}.
\end{array}\right.$$
Hence
\begin{equation}\label{def:EuPLPS}
E(\cP(\bsg,f),\uu) =   \sum_{\bsk_\uu \in \{1, \ldots, 2^m-1\}^{|\uu|} \atop \bsg_\uu \cdot \bsk_\uu \equiv 0 \pmod{f}} \frac{1}{2^{\mu(\bsk_\uu)}}. 
\end{equation}

\paragraph{Existence results.} For given irreducible polynomial $f \in \FF_2[x]$ with $\deg(f)=m$ we average $B_{\bsgamma}(\bsg,f)$ over all possible generating vectors $\bsg \in G_m^s$, where $$G_m=\{h \in \FF_2[x]\ : \ \deg(h)<m\}.$$ 

\begin{theorem}\label{le_av}
For any irreducible polynomial $f \in \FF_2[x]$ with $\deg(f)=m$ we have $$\frac{1}{|G_m^s|} \sum_{\bsg \in G_m^s}B_{\bsgamma}(\bsg,f) =   \frac{1}{2^m} \sum_{\emptyset \neq \uu \subseteq [s]}\gamma_\uu \left(\frac{m}{2}\right)^{|\uu|} \prod_{i \in \uu} (b_i-a_i).$$ Furthermore, for every $c \ge 1$ we have
\begin{equation}\label{eq:Markov}
\#\left\{ \bsg \in G_m^s \ : B_{\bsgamma}(\bsg,f) \le  \frac{c}{2^m} \sum_{\emptyset \neq \uu \subseteq [s]}\gamma_\uu \left(\frac{m}{2}\right)^{|\uu|} \prod_{i \in \uu} (b_i-a_i) \right\} > 2^{s m} \left(1-\frac{1}{c}\right).
\end{equation}
In particular, there exists a $\bsg^\ast \in G_m^s$ such that 
\begin{equation}\label{ex:qast}
B_{\bsgamma}(\bsg^\ast,f) \le  \frac{1}{2^m} \sum_{\emptyset \neq \uu \subseteq [s]}\gamma_\uu \left(\frac{m}{2}\right)^{|\uu|} \prod_{i \in \uu} (b_i-a_i).
\end{equation}
\end{theorem}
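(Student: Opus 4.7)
The plan is to compute the average by interchanging the sum over $\bsg \in G_m^s$ with the sum over $\bsk_\uu$ in the definition of $E(\cP(\bsg,f),\uu)$, and then obtain the concentration inequality~\eqref{eq:Markov} and the existence statement~\eqref{ex:qast} by a Markov-type averaging argument.

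First I would insert the formula \eqref{def:EuPLPS} for $E(\cP(\bsg,f),\uu)$ into the definition of $B_\bsgamma(\bsg,f)$ and exchange the order of summation to write
$$\frac{1}{|G_m^s|} \sum_{\bsg \in G_m^s} B_\bsgamma(\bsg,f) = \sum_{\emptyset \neq \uu \subseteq [s]} \gamma_\uu \prod_{i \in \uu}(b_i-a_i) \sum_{\bsk_\uu \in \{1,\ldots,2^m-1\}^{|\uu|}} \frac{1}{2^{\mu(\bsk_\uu)}} \cdot \frac{N(\bsk_\uu)}{2^{sm}},$$
where $N(\bsk_\uu) := \#\{\bsg \in G_m^s : \bsg_\uu \cdot \bsk_\uu \equiv 0 \pmod f\}$. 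The decisive step is to determine $N(\bsk_\uu)$, and this is where the assumption that $f$ is irreducible enters: $\FF_2[x]/(f)$ is then a field of order $2^m$, and each $k_i$ (identified with a nonzero polynomial of degree less than $m$) is a unit. Consequently the congruence is a single nontrivial linear equation in the unknowns $(g_i)_{i \in \uu} \in G_m^{|\uu|}$, whose solution set has exactly $2^{m(|\uu|-1)}$ elements. The components $g_j$ with $j \notin \uu$ remain unconstrained, contributing a factor $2^{m(s-|\uu|)}$, so $N(\bsk_\uu) = 2^{m(s-1)}$ and hence $N(\bsk_\uu)/2^{sm} = 2^{-m}$.

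Once this is inserted, the sum over $\bsk_\uu$ factors coordinatewise, and it remains to evaluate $\sum_{k=1}^{2^m-1} 2^{-\mu(k)}$. Grouping $k$ by $\mu(k) = j+1$ for $j = 0,1,\ldots,m-1$ (there are $2^j$ such $k$, namely those in $\{2^j,\ldots,2^{j+1}-1\}$) shows that this univariate sum equals $\sum_{j=0}^{m-1} 2^j / 2^{j+1} = m/2$. Substituting back yields the claimed mean identity.

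Finally, let $M$ denote this average, which is strictly positive because $\gamma_\uu > 0$ and $b_i > a_i$. If $N_c := \#\{\bsg \in G_m^s : B_\bsgamma(\bsg,f) > cM\}$, then $cM \cdot N_c < \sum_{\bsg} B_\bsgamma(\bsg,f) = 2^{sm} M$, forcing $N_c < 2^{sm}/c$, which is equivalent to~\eqref{eq:Markov}; the choice $c=1$ then produces at least one $\bsg^\ast$ satisfying~\eqref{ex:qast}. The main obstacle is the counting step: everything hinges on irreducibility of $f$, which is what guarantees that each nonzero $k_i$ of degree less than $m$ is invertible modulo $f$ and hence that the congruence defines a hyperplane of the expected dimension. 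Once this is established, the remaining manipulations are a routine interchange of sums and an application of Markov's inequality.
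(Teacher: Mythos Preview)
Your proposal is correct and follows essentially the same route as the paper: interchange the sums, count the solutions of $\bsg_\uu\cdot\bsk_\uu\equiv 0\pmod f$ using that $\FF_2[x]/(f)$ is a field (the paper states this as $\sum 1=(2^m)^{|\uu|-1}$, which is your $N(\bsk_\uu)/2^{m(s-|\uu|)}$), evaluate $\sum_{k=1}^{2^m-1}2^{-\mu(k)}=m/2$ exactly as in the paper's \eqref{sum:muk}, and finish with Markov's inequality and the choice $c=1$.
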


\begin{proof}
The proof uses standard techniques. Note that $|G_m^s|=2^{s m}$. We have
\begin{eqnarray*}
\lefteqn{\frac{1}{|G_m^s|} \sum_{\bsg \in G_m^s}B_{\bsgamma}(\bsg,f)}\\
& = & \frac{1}{2^{sm}} \sum_{\bsg \in G_m^s} \sum_{\emptyset \neq \uu \subseteq [s]}  \gamma_\uu \, \prod_{i \in \uu} (b_i - a_i)  \sum_{\bsk_\uu \in \{1, \ldots, 2^m-1\}^{|\uu|} \atop \bsg_\uu \cdot \bsk_\uu \equiv 0 \pmod{f}} \frac{1}{2^{\mu(\bsk_\uu)}}\\
& = & \sum_{\emptyset \neq \uu \subseteq [s]}  \gamma_\uu \, \prod_{i \in \uu} (b_i - a_i) \,  \sum_{\bsk_\uu \in \{1, \ldots, 2^m-1\}^{|\uu|}} \frac{1}{2^{\mu(\bsk_\uu)}}  \frac{1}{2^{
|\uu| m}} \sum_{\bsg_\uu \in G_m^{|\uu|} \atop \bsg_\uu \cdot \bsk_\uu \equiv 0 \pmod{f}} 1\\
& = & \frac{1}{2^m} \sum_{\emptyset \neq \uu \subseteq [s]}  \gamma_\uu \, \prod_{i \in \uu} (b_i - a_i) \,  \sum_{\bsk_\uu \in \{1, \ldots, 2^m-1\}^{|\uu|}} \frac{1}{2^{\mu(\bsk_\uu)}},
\end{eqnarray*}
where we used that $$\sum_{\bsg_\uu \in G_m^{|\uu|} \atop \bsg_\uu \cdot \bsk_\uu \equiv 0 \pmod{f}} 1=(2^m)^{|\uu|-1},$$ because $f$ is irreducible.

We have $$ \sum_{\bsk_\uu \in \{1, \ldots, 2^m-1\}^{|\uu|}} \frac{1}{2^{\mu(\bsk_\uu)}}=\left(\sum_{k=1}^{2^m -1} \frac{1}{2^{\mu(k)}}\right)^{|\uu|}$$ and 
\begin{equation}\label{sum:muk}
\sum_{k=1}^{2^m -1} \frac{1}{2^{\mu(k)}} =\sum_{a=0}^{m-1}\sum_{k=2^a}^{2^{a+1}-1} \frac{1}{2^{a+1}} =\frac{m}{2}.
\end{equation}
This yields $$\frac{1}{|G_m^s|} \sum_{\bsg \in G_m^s}B_{\bsgamma}(\bsg,f) =   \frac{1}{2^m} \sum_{\emptyset \neq \uu \subseteq [s]}\gamma_\uu \left(\frac{m}{2}\right)^{|\uu|} \prod_{i \in \uu} (b_i-a_i),$$ as desired.

The estimate \eqref{eq:Markov} follows from an application of Markov's inequality. The existence of $\bsg^\ast$ satisfying \eqref{ex:qast} follows from choosing $c=1$ in \eqref{eq:Markov}.
\end{proof}

Combining the above existence result with \eqref{wce:estPLPS} we obtain the following corollary:

\begin{corollary}
For any irreducible polynomial $f \in \FF_2[x]$ with $\deg(f)=m$ there exists a $\bsg^\ast \in G_m^s$ such that 
\begin{equation*}
{\rm wce}(\cP(\bsg^\ast,f); \infty,s,\bsgamma, \varphi)\le \frac{1}{2^m} \sum_{\emptyset \neq \uu \subseteq [s]}  \gamma_\uu \, \left(2^{|\uu|} + \left(\frac{m}{2}\right)^{|\uu|} \right)\, \prod_{i \in \uu} (b_i - a_i).
\end{equation*}
\end{corollary}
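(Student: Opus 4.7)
The plan is to simply splice together Theorem~\ref{le_av} and the worst-case error bound in \eqref{wce:estPLPS}. Since $f$ is irreducible of degree $m$, invoking Theorem~\ref{le_av} with $c=1$ (i.e.\ the ``there exists'' statement at the end) produces a generating vector $\bsg^\ast \in G_m^s$ satisfying
\[
B_{\bsgamma}(\bsg^\ast,f) \le \frac{1}{2^m}\sum_{\emptyset \neq \uu \subseteq [s]} \gamma_\uu \left(\frac{m}{2}\right)^{|\uu|} \prod_{i \in \uu}(b_i-a_i).
\]
(Recall that $B_\bsgamma(\bsg,f)$ is exactly $\sum_{\emptyset \neq \uu\subseteq [s]}\gamma_\uu \prod_{i\in\uu}(b_i-a_i)\,E(\cP(\bsg,f),\uu)$, so this directly controls the second term in \eqref{wce:estPLPS}.)

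Then I would plug this $\bsg^\ast$ into the inequality \eqref{wce:estPLPS}, which is itself an immediate specialization of Theorem~\ref{thm1} to $p=\infty$, $q=1$. The first summand on the right-hand side of \eqref{wce:estPLPS} is $\frac{1}{2^m}\sum_{\emptyset\neq \uu\subseteq[s]}\gamma_\uu 2^{|\uu|} \prod_{i\in\uu}(b_i-a_i)$ and does not depend on the choice of $\bsg$. Adding the two contributions and factoring out the common $\frac{1}{2^m}\gamma_\uu \prod_{i\in \uu}(b_i-a_i)$ yields exactly the bound
\[
{\rm wce}(\cP(\bsg^\ast,f); \infty,s,\bsgamma, \varphi)\le \frac{1}{2^m} \sum_{\emptyset \neq \uu \subseteq [s]}  \gamma_\uu \, \left(2^{|\uu|} + \left(\tfrac{m}{2}\right)^{|\uu|} \right)\, \prod_{i \in \uu} (b_i - a_i).
\]

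There is really no obstacle here: the corollary is stated as a straightforward combination, and both ingredients (the mean-value existence bound for $B_\bsgamma$ over irreducible $f$, and the worst-case error decomposition from Theorem~\ref{thm1}) are already established in the preceding text. The only thing to be mildly careful about is that Theorem~\ref{le_av} requires the projection-regularity of $\cP(\bsg^\ast,f)$ (so that Theorem~\ref{thm1} is applicable), which is guaranteed because $f$ is irreducible of positive degree: then $\gcd(g_i,f)=1$ automatically for every nonzero $g_i \in G_m$, and if some $g_i$ were zero the contribution to $B_\bsgamma$ would be large, so the averaging argument excludes this case implicitly. Hence the proof is a one-line combination, and I would present it as such.
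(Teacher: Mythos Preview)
Your proposal is correct and matches the paper's approach exactly: the paper simply states that the corollary follows by ``combining the above existence result with \eqref{wce:estPLPS}''. One tiny slip: it is \eqref{wce:estPLPS} (via Theorem~\ref{thm1}) that needs projection regularity, not Theorem~\ref{le_av}; your observation that the irreducibility of $f$ together with the averaging bound forces $g_i^\ast\neq 0$ (hence $\gcd(g_i^\ast,f)=1$) is a nice extra check that the paper leaves implicit.
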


\paragraph{Fast component-by-component construction.}

Initially developed for lattice rules (see, e.g., \cite{K59, K03,NC06,SKJ02}), nowadays the fast component-by-component construction is also used to construct polynomial lattice rules (see, e.g.,  \cite{DKPS05,DLP05}). This construction method is very efficient and yields polynomial lattice rules which guarantee the almost optimal order of error convergence in the number of employed nodes and even the currently best results in terms of the dependence of the error on the dimension $s$.

Here we propose a component-by-component algorithm that is based on the figure-of-merit $B_{\bsgamma}$.

\begin{algorithm}\label{alg_weight}
Given a polynomial $f\in \FF_2[x]$, with $\deg(f) =m$, and weights $\bsgamma = \{\gamma_\uu \in \mathbb{R}_+ \ : \ \uu \subseteq [s]\}$. Construct a vector $\bsg^\ast=(g_1^\ast,\ldots,g_s^\ast)$ as follows:
\begin{enumerate}
\item Set $g_1^*=1$.
\item For $d=2,3,\ldots,s$: if $g_1^\ast,\ldots,g_{d-1}^\ast$ are already chosen, then find $g_d^* \in G_m$ by minimizing $B_{\bsgamma}((g_1^*,\ldots ,g_{d-1}^*,g_d),f)$ as a function of $g_d$ over $G_m$.
\end{enumerate}
\end{algorithm}

\begin{remark}\rm 
Note that the figure-of-merit $B_{\bsgamma}$ depends only on the size of the cube but is otherwise independent of the product measure. Hence, the polynomial lattice rule constructed by Algorithm~\ref{alg_weight} is universal in the sense that it works for any product $\varphi$ of PDFs. \end{remark}

In the following we restrict ourselves again to irreducible polynomials $f \in \mathbb{F}_2[x]$ and show that in this case the resulting generating vector $\bsg^\ast$ is of good quality with respect to the figure-of-merit $B_{\bsgamma}$. The resulting polynomial lattice point set $\cP(\bsg^*,f)$ is projection regular, because $f$ is irreducible and $\bsg^* \in G_m$ and hence $\gcd(g_i^*,f)=1$ for every $i \in [s]$.

\begin{theorem}\label{thm:CBC}
Let  $f\in \FF_2[x]$ be irreducible with $\deg(f) =m$. Suppose that $\bsg^*=(g_1^*,\ldots,g_s^*) \in G_m ^s$ is constructed according to Algorithm~\ref{alg_weight}. Then for all $d \in [s]$ we have  
$$B_{\bsgamma}((g_1^*,\ldots,g_d^*),f) \le \frac{1}{2^m}\sum_{\emptyset \not=\uu \subseteq [d]} \gamma_\uu \left(\frac{m}{2}\right)^{|\uu|} \prod_{i \in \uu} (b_i-a_i).$$
\end{theorem}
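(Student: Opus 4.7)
The plan is to prove the bound by induction on $d$, following the standard component-by-component averaging argument. For $d=1$ the claim is that
$$B_{\bsgamma}((1),f) = \gamma_{\{1\}}(b_1-a_1) \sum_{k=1,\, k \equiv 0 \pmod f}^{2^m-1} \frac{1}{2^{\mu(k)}}.$$
Since $\deg f = m$ and $1 \le k \le 2^m-1$ implies $\deg k < m$, no such $k$ exists, so $B_{\bsgamma}((1),f)=0$, which trivially satisfies the bound.

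For the inductive step, assume the result holds for $d-1$. Split
$$B_{\bsgamma}((g_1^*,\ldots,g_{d-1}^*,g_d),f) = \Sigma_{\not\ni d}(g_d) + \Sigma_{\ni d}(g_d),$$
where $\Sigma_{\not\ni d}$ collects the terms with $d \notin \uu$ and $\Sigma_{\ni d}$ collects the terms with $d \in \uu$. Crucially, $\Sigma_{\not\ni d}(g_d)$ is independent of $g_d$ and coincides with $B_{\bsgamma}((g_1^*,\ldots,g_{d-1}^*),f)$, so by the induction hypothesis it is bounded by $\frac{1}{2^m}\sum_{\emptyset \neq \uu \subseteq [d-1]} \gamma_\uu (m/2)^{|\uu|}\prod_{i \in \uu}(b_i-a_i)$. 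Since $g_d^*$ is chosen to minimize the full expression over $g_d \in G_m$, it also does no worse than the average:
$$B_{\bsgamma}((g_1^*,\ldots,g_d^*),f) \le \frac{1}{|G_m|}\sum_{g_d \in G_m} B_{\bsgamma}((g_1^*,\ldots,g_{d-1}^*,g_d),f) = \Sigma_{\not\ni d} + \frac{1}{|G_m|}\sum_{g_d \in G_m} \Sigma_{\ni d}(g_d).$$

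The remaining step is to evaluate the average of $\Sigma_{\ni d}$ over $g_d$. For each $\uu \ni d$ and each fixed $\bsk_\uu \in \{1,\ldots,2^m-1\}^{|\uu|}$, the congruence $\sum_{i \in \uu} g_i^* k_i \equiv 0 \pmod f$ viewed as a linear condition on $g_d$ amounts to $g_d k_d \equiv -\sum_{i \in \uu\setminus\{d\}} g_i^* k_i \pmod f$. Since $f$ is irreducible with $\deg f = m$ and $1 \le k_d \le 2^m-1$ implies $\deg k_d < m$, one has $\gcd(k_d,f)=1$, so $k_d$ is invertible modulo $f$ and there is exactly one $g_d \in G_m$ solving the congruence. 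Therefore
$$\frac{1}{|G_m|}\sum_{g_d \in G_m}\Sigma_{\ni d}(g_d) = \frac{1}{2^m}\sum_{\uu \subseteq [d],\, d \in \uu} \gamma_\uu \prod_{i \in \uu}(b_i-a_i) \sum_{\bsk_\uu \in \{1,\ldots,2^m-1\}^{|\uu|}} \frac{1}{2^{\mu(\bsk_\uu)}},$$
which by \eqref{sum:muk} equals $\frac{1}{2^m}\sum_{\uu \subseteq [d],\, d \in \uu} \gamma_\uu (m/2)^{|\uu|}\prod_{i \in \uu}(b_i-a_i)$. Combining with the bound on $\Sigma_{\not\ni d}$ yields the stated inequality.

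The argument is essentially the same as the one already executed in the proof of Theorem~\ref{le_av}, localised to sets $\uu$ containing the newly added coordinate. The one potential stumbling block is ensuring invertibility of $k_d$ modulo $f$, which is precisely why the hypothesis that $f$ is irreducible enters; once this is in place, the counting of solutions to the linear congruence is immediate and the induction goes through cleanly.
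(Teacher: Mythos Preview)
Your proposal is correct and follows essentially the same approach as the paper's own proof: induction on $d$, splitting $B_{\bsgamma}$ into the part over $\uu \subseteq [d-1]$ (handled by the induction hypothesis) and the part over $\uu \ni d$ (bounded via the averaging argument, using irreducibility of $f$ to count solutions of the linear congruence modulo $f$, and then \eqref{sum:muk}). The only cosmetic difference is that the paper bounds $\Theta(g_d^*)$ by its average and then adds the constant term, whereas you average the full expression directly; the two are equivalent.
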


\begin{proof}
The proof uses standard arguments and is based on induction on $d$.

For $d=1$ we have $B_{\bsgamma}((1),f)=0$ and hence the desired bound holds true trivially.

In the induction hypothesis we assume that for some $d \in \{2,\ldots,s\}$ we have already constructed $\bsg_{d-1}^{\ast}:=(g_1^*,\ldots,g_{d-1}^*) \in G_m^{d-1}$ such that 
\begin{equation}\label{indhyp}
B_{\bsgamma}(\bsg_{d-1}^*,f)\le \frac{1}{2^m}\sum_{\emptyset \not=\uu \subseteq [d-1]} \gamma_{\uu} \left(\frac{m}{2}\right)^{|\uu|} \prod_{i \in \uu} (b_i-a_i).
\end{equation}

Now we  consider $B_{\bsgamma}((\bsg_{d-1}^*,g_d),f)$ where $(\bsg_{d-1}^*,g_d):=(g_1^*,\ldots,g_{d-1}^*,g_d)$. Separating the summation over all $\emptyset \not=\uu \subseteq [d]$ into summation over all $\emptyset \not=\uu \subseteq [d-1]$ and summation over all $\uu \subseteq [d]$ that contain the element $d$ we obtain
\begin{eqnarray}\label{eq:indstep}
B_{\bsgamma}((\bsg_{d-1}^*,g_d),f) & = & \sum_{\emptyset \neq \uu \subseteq [d]} \gamma_{\uu} \prod_{i \in \uu} (b_i -a_i) \sum_{\bsk_\uu \in \{1, \ldots, 2^m-1\}^{|\uu|} \atop (\bsg_{d-1}^*,g_d)_\uu \cdot \bsk_\uu \equiv 0 \pmod{f}} \frac{1}{2^{\mu(\bsk_\uu)}}\nonumber \\
& = & B_{\bsgamma}(\bsg_{d-1}^*,f)+\Theta(g_d),
\end{eqnarray}
where
\begin{eqnarray*}
\Theta(g_d) & := & \sum_{d\in \uu \subseteq [d]} \gamma_\uu \prod_{i \in \uu} (b_i-a_i)  \sum_{\bsk_\uu \in \{1, \ldots, 2^m-1\}^{|\uu|} \atop (\bsg_{d-1}^*,g_d)_\uu \cdot \bsk_\uu \equiv 0 \pmod{f}} \frac{1}{2^{\mu(\bsk_\uu)}}\\
& = & \sum_{d\in \uu \subseteq [d]} \gamma_\uu \prod_{i \in \uu} (b_i-a_i) \sum_{k_d = 1}^{2^m -1} \frac{1}{2^{\mu(k_d)}} \sum_{\bsk_{\uu\setminus \{d\}} \in \{1, \ldots, 2^m-1\}^{|\uu|-1} \atop (\bsg_{d-1}^*)_{\uu \setminus \{d\}} \cdot \bsk_{\uu \setminus \{d\}} + g_d k_d \equiv 0 \pmod{f}} \frac{1}{2^{\mu(\bsk_{\uu\setminus \{d\}})}}.
\end{eqnarray*}
Note that the only dependence of $B_{\bsgamma}((\bsg_{d-1}^*,g_d),f)$ on $g_d$ is via $\Theta(g_d)$ and hence a minimizer of $B_{\bsgamma}((\bsg_{d-1}^*,g_d),p)$ is also a minimizer of $\Theta(g_d)$ and vice versa.

Now we use an averaging argument. Since the minimum never exceeds the average we have 
\begin{eqnarray*}
 \Theta(g_d^*) & \le & \frac{1}{2^m} \sum_{g_d \in G_m} \Theta(g_d)\\
 & = & \frac{1}{2^m} \sum_{g_d \in G_m}  \sum_{d\in \uu \subseteq [d]} \gamma_\uu \prod_{i \in \uu} (b_i-a_i) \sum_{k_d = 1}^{2^m -1} \frac{1}{2^{\mu(k_d)}} \sum_{\bsk_{\uu\setminus \{d\}} \in \{1, \ldots, 2^m-1\}^{|\uu|-1} \atop (\bsg_{d-1}^*)_{\uu \setminus \{d\}} \cdot \bsk_{\uu \setminus \{d\}} + g_d k_d \equiv 0 \pmod{f}} \frac{1}{2^{\mu(\bsk_{\uu\setminus \{d\}})}}\\
& = & \frac{1}{2^m} \sum_{d\in \uu \subseteq [d]} \gamma_\uu \prod_{i \in \uu} (b_i - a_i) \sum_{k_d = 1}^{2^m -1} \frac{1}{2^{\mu(k_d)}} \sum_{\bsk_{\uu\setminus \{d\}} \in \{1, \ldots, 2^m-1\}^{|\uu|-1}} \frac{1}{2^{\mu(\bsk_{\uu\setminus \{d\}})}}\\
&& \hspace{2cm} \times \sum_{g_d \in G_m \atop g_d k_d \equiv -(\bsg_{d-1}^*)_{\uu \setminus \{d\}} \cdot \bsk_{\uu \setminus \{d\}} \pmod{f}} 1.
\end{eqnarray*}
Since, for fixed $k_d \in \{1,\ldots,2^m -1\}$, the polynomial $\gcd(k_d,f)=1$, because $f$ is irreducible and $\deg(k_d) < \deg(f)$, it follows that the linear polynomial congruence $$g_d k_d \equiv -(\bsg_{d-1}^*)_{\uu \setminus \{d\}} \cdot \bsk_{\uu \setminus \{d\}} \pmod{f}$$ has exactly one solution $g_d \in G_{m}$. Hence
\begin{eqnarray*}
 \Theta(g_d^*) & \le & \frac{1}{2^m} \sum_{d\in \uu \subseteq [d]} \gamma_\uu \prod_{i \in \uu} (b_i - a_i) \sum_{k_d = 1}^{2^m -1} \frac{1}{2^{\mu(k_d)}} \sum_{\bsk_{\uu\setminus \{d\}} \in \{1, \ldots, 2^m-1\}^{|\uu|-1}} \frac{1}{2^{\mu(\bsk_{\uu\setminus \{d\}})}}\\
& = & \frac{1}{2^m} \sum_{d\in \uu \subseteq [d]} \gamma_\uu \prod_{i \in \uu} (b_i - a_i) \left(\sum_{k = 1}^{2^m -1} \frac{1}{2^{\mu(k)}}\right)^{|\uu|} \\
& = &  \frac{1}{2^m} \sum_{d\in \uu \subseteq [d]} \gamma_\uu \, \left(\frac{m}{2}\right)^{|\uu|} \,  \prod_{i \in \uu} (b_i - a_i), 
\end{eqnarray*}
according to \eqref{sum:muk}. Inserting this into \eqref{eq:indstep} and using the induction hypothesis \eqref{indhyp} we obtain
\begin{eqnarray*}
B_{\bsgamma}((\bsg_{d-1}^*,g_d^*),f) & \le &  \frac{1}{2^m}\sum_{\emptyset \not= \uu \subseteq [d-1]} \gamma_\uu \, \left(\frac{m}{2}\right)^{|\uu|} \, \prod_{i \in \uu} (b_i - a_i) + \frac{1}{2^m} \sum_{d\in \uu \subseteq [d]} \gamma_\uu \, \left(\frac{m}{2}\right)^{|\uu|} \,  \prod_{i \in \uu} (b_i - a_i) \\
& = & \frac{1}{2^m}\sum_{\emptyset \not= \uu \subseteq [d]} \gamma_\uu \, \left(\frac{m}{2}\right)^{|\uu|} \, \prod_{i \in \uu} (b_i - a_i).
\end{eqnarray*}
This finishes the induction proof of Theorem~\ref{thm:CBC}. 
\end{proof}

Combining \eqref{wce:estPLPS} and Theorem~\ref{thm:CBC} we obtain:

\begin{corollary}\label{co1}
Let  $f\in \FF_2[x]$ be irreducible with $\deg(f) =m$. Suppose that $\bsg^*=(g_1^*,\ldots,g_s^*) \in G_m^s$ is constructed according to Algorithm~\ref{alg_weight}. Then for all $d\in[s]$ we have  
\begin{equation*}
{\rm wce}(\cP((g_1^*,\ldots,g_d^*),f); \infty,d,\bsgamma, \varphi)\le \frac{1}{2^m} \sum_{\emptyset \neq \uu \subseteq [d]}  \gamma_\uu \, \left(2^{|\uu|} + \left(\frac{m}{2}\right)^{|\uu|} \right)\, \prod_{i \in \uu} (b_i - a_i).
\end{equation*}
\end{corollary}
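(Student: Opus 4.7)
The corollary is a direct synthesis of two pieces already established: the general worst-case bound \eqref{wce:estPLPS} and the component-by-component bound in Theorem~\ref{thm:CBC}. The plan is therefore to simply glue the two together, with one small preliminary check about applicability.

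First, I would verify that \eqref{wce:estPLPS} can indeed be applied to the polynomial lattice point set $\cP((g_1^\ast,\ldots,g_d^\ast),f)$. Recall that \eqref{wce:estPLPS} is derived from Theorem~\ref{thm1}, which requires projection regularity of the node set. As noted in the discussion just preceding Theorem~\ref{thm:CBC}, $\cP((g_1^\ast,\ldots,g_d^\ast),f)$ is projection regular as soon as $\gcd(g_i^\ast,f)=1$ for every $i\in[d]$; and this condition holds automatically here because $f$ is irreducible with $\deg(f)=m$ while each $g_i^\ast\in G_m$ has $\deg(g_i^\ast)<m$ and is nonzero (indeed $g_1^\ast=1$ and all later components are chosen from $G_m$; any $g\in G_m\setminus\{0\}$ is coprime to the irreducible $f$ for degree reasons).

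Next I would write down \eqref{wce:estPLPS} in dimension $d$ with the specific node set at hand, which splits the upper bound into two additive contributions:
\begin{equation*}
\mathrm{wce}(\cP((g_1^\ast,\ldots,g_d^\ast),f);\infty,d,\bsgamma,\varphi) \le \frac{1}{2^m}\sum_{\emptyset\neq\uu\subseteq[d]}\gamma_\uu\,2^{|\uu|}\prod_{i\in\uu}(b_i-a_i) + B_{\bsgamma}((g_1^\ast,\ldots,g_d^\ast),f),
\end{equation*}
where the second summand is exactly the figure-of-merit controlled in Theorem~\ref{thm:CBC}.

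Finally, I would invoke Theorem~\ref{thm:CBC} (used in dimension $d$, which is legitimate since Algorithm~\ref{alg_weight} is defined inductively and the theorem is stated for all $d\in[s]$) to replace $B_{\bsgamma}$ by $\frac{1}{2^m}\sum_{\emptyset\neq\uu\subseteq[d]}\gamma_\uu (m/2)^{|\uu|}\prod_{i\in\uu}(b_i-a_i)$, and then combine the two sums over $\uu$ into the single sum with factor $2^{|\uu|}+(m/2)^{|\uu|}$ appearing in the statement. There is no real obstacle: all the work has been done in Theorems~\ref{thm1} and~\ref{thm:CBC}, and the corollary is a one-line combination once projection regularity is noted.
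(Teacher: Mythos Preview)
Your proposal is correct and follows exactly the route the paper takes: the paper's own argument for Corollary~\ref{co1} is the single line ``Combining \eqref{wce:estPLPS} and Theorem~\ref{thm:CBC} we obtain'', and you have simply spelled this combination out in more detail, including the projection-regularity check that the paper states just before Theorem~\ref{thm:CBC}.
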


\paragraph{Efficient calculation of $B_{\bsgamma}(\bsg,f)$.}

In order to be able to implement the fast component-by-component construction, we need an efficient method for calculating the quality criterion $B_{\bsgamma}(\bsg,f)$. We have
\begin{eqnarray*}
B_{\bsgamma}(\bsg,f) & = & \sum_{\emptyset \neq \uu \subseteq [s]} \gamma_\uu \prod_{i \in \uu} (b_i -a_i)  \sum_{\bsk_\uu \in \{1, \ldots, 2^m-1\}^{|u|}} \frac{1}{2^{\mu(\bsk_\uu)}} \frac{1}{2^m} \sum_{n=0}^{N-1} \mathrm{wal}_{\bsk_\uu}(\bsy_{n,\uu})\nonumber\\
& = &  \frac{1}{2^m} \sum_{n=0}^{2^m-1} \sum_{\emptyset \neq \uu \subseteq [s]} \gamma_\uu \prod_{i \in \uu} \left[(b_i -a_i)  \sum_{k = 1}^{2^m-1} \frac{1}{2^{\mu(k)}} \mathrm{wal}_{k}(y_{n,i})\right]\nonumber\\
& = &  \frac{1}{2^m} \sum_{n=0}^{2^m-1} \sum_{\emptyset \neq \uu \subseteq [s]} \gamma_\uu \prod_{i \in \uu} \left[(b_i -a_i)  \sum_{j=0}^{m-1} \frac{1}{2^{j+1}} \sum_{k = 2^j}^{2^{j+1}-1}  \mathrm{wal}_{k}(y_{n,i})\right]\nonumber
\end{eqnarray*}
since $\mu(k)=j+1$ for all $k \in \{2^j,\ldots,2^{j+1}-1\}$.

For $m$-bit numbers $x=\frac{\xi_1}{2}+\frac{\xi_2}{2^2}+\cdots +\frac{\xi_m}{2^m}$ with $\xi_l \in \{0,1\}$, we define  $$\phi(x):= \sum_{j=0}^{m-1} \frac{1}{2^j} \sum_{k = 2^j}^{2^{j+1}-1}  \mathrm{wal}_k(x).$$ Then we obtain   
\begin{equation}\label{effc1}
B_{\bsgamma}(\bsg,f)
= \frac{1}{2^m} \sum_{n=0}^{2^m-1} \sum_{\emptyset \neq \uu \subseteq [s]} \gamma_\uu \prod_{i\in \uu}\left[ \frac{b_i - a_i}{2}\, \phi(y_{n,i})\right].
\end{equation}

The $2^m$ values of $\phi(x)$ for $m$-bit numbers $x \in \{0,\tfrac{1}{2^m},\tfrac{2}{2^m},\ldots,\tfrac{2^m -1}{2^m}\}$ can be easily pre-computed: It is easily checked that 
\begin{equation*}
\frac{1}{2^j} \sum_{k=2^j}^{2^{j+1}-1} \wal_k(x) = \begin{cases} (-1)^{\xi_{j+1}} & \mbox{if } \xi_1 = \xi_2 = \ldots = \xi_j = 0, \\ 0 & \mbox{otherwise}, \end{cases}
\end{equation*}
and therefore we have 
\begin{equation}\label{effc2}
\phi(x)=\left\{ 
\begin{array}{ll}
 m & \mbox{ if $x=0$},\\
 i_0-2 & \mbox{ if $\xi_1=\ldots=\xi_{i_0-1}=0$ and $\xi_{i_0}=1$ where $i_0 \in \{1,2,\ldots,m\}$}. 
\end{array}\right.
\end{equation}

Now we consider two special types of weights:

\paragraph{Product weights.} For product weights of the form \eqref{prodW} we obtain from \eqref{effc1} $$B_{\bsgamma}(\bsg,f)=-1+\frac{1}{2^m}\sum_{n=0}^{2^m-1} \prod_{i=1}^s \left(1+\gamma_i \frac{b_i-a_i}{2}\, \phi(y_{n,i})\right)$$ and hence $B_{\bsgamma}(\bsg,f)$ can be computed in $O(sN)$ operations using \eqref{effc2}. 

\paragraph{POD weights.} For POD weights of the form \eqref{PODw} we can use a recursive computation of $B_{\bsgamma}$. We use an idea that was first proposed in \cite[Section~5]{KSS11}. From \eqref{effc1} we obtain \begin{eqnarray*}
\lefteqn{B_{\bsgamma}(\bsg,f)}\\
& = &  \frac{1}{2^m} \sum_{n=0}^{2^m-1}  \sum_{\emptyset \neq \uu \subseteq [s]} \Gamma_{|\uu|}  \prod_{i \in \uu} \left[\gamma_i \, \frac{b_i - a_i}{2} \, \phi(y_{n,i}) \right] \\
& = &  \frac{1}{2^m} \sum_{n=0}^{2^m-1} \sum_{\ell=1}^s \frac{\Gamma_{\ell}}{2^{\ell}}  \sum_{\uu \subseteq [s] \atop |\uu|=\ell}  \prod_{i \in \uu} \left[\gamma_j \, (b_i - a_i)\, \phi(y_{n,i})\right]\\
& = &  \frac{1}{N} \sum_{n=0}^{N-1} \sum_{\ell=1}^s \frac{\Gamma_{\ell}}{2^{\ell}} \Bigg[ \underbrace{\sum_{\uu \subseteq [s-1] \atop |\uu|=\ell}  \prod_{j \in \uu} (\gamma_j (b_j - a_j) \phi(y_{n,j}))}_{=:a_{s-1,\ell}(n)} + \gamma_s (b_s - a_s) \phi(y_{n,s}) \underbrace{\sum_{\uu \subseteq [s-1] \atop |\uu|=\ell-1}  \prod_{j \in \uu} (\gamma_j (b_j - a_j) \phi(y_{n,j}))}_{=: a_{s-1,\ell-1}(n)} \Bigg]\\
& = & B_{\bsgamma}((g_1,\ldots,g_{s-1}),f) + \frac{\gamma_s(b_s-a_s)}{N} \sum_{n=0}^{N-1}  \phi(y_{n,s}) \left(  \sum_{\ell=1}^s \frac{\Gamma_{\ell}}{2^{\ell}} a_{s-1,\ell-1}(n)\right).
\end{eqnarray*}
Hence, using the initial value $$B_{\bsgamma}((),f):=1$$ and the recursion
\begin{eqnarray*}
a_{s,0}(n) & := & 1 \\ 
a_{s,\ell}(n) & = & a_{s-1,\ell}(n)+ \gamma_s (b_s-a_s) \phi(y_{n,s}) a_{s-1,\ell-1}(n),
\end{eqnarray*}
we can compute the quality criterion $B_{\bsgamma}(\bsg,f)$ recursively.

\section{Strong polynomial tractability}\label{sec:SPT}

Tractability deals with the growth rate of the information complexity of a problem when the dimension  $s$ tends to infinity and the error demand $\varepsilon$ goes to zero. We explain this in more detail: Consider a generic sequence of integration problems $(I_s: \mathcal{H}_s \rightarrow \RR)_{s \ge 1}$, where $\mathcal{H}_s$ are normed spaces of $s$-variate functions and $I_s$ is the integral operator, with worst-case error ${\rm wce}$ depending on the integration nodes $\cP$ in dimension $s$. Then the {\it information complexity} of the problem is defined as $$N_{\bsgamma}(\varepsilon,s)=\min\{N \in \NN \ : \ \exists \cP \subseteq [0,1)^s, |\cP|=N \mbox{ such that } {\rm wce}(\cP) \le \varepsilon\}.$$ This is the minimal number of points that are required in order to approximate the integral in the worst case within an error of at most $\varepsilon$. There are several notions of tractability which classify the growth rate of the information complexity when $s \rightarrow \infty$ and $\varepsilon \rightarrow 0$. Here we only consider the notion of strong polynomial tractability. For general information about tractability studies we refer to the three volumes \cite{NW1,NW2,NW3} by Novak and Wo\'{z}niakowski.

\begin{definition}\rm
The integration problem is said to be {\it strongly polynomially tractable}, if there exist $C>0$ and $\tau>0$ such that 
\begin{equation}\label{def:SPT}
N_{\bsgamma}(\varepsilon,s) \le C \varepsilon^{-\tau} \ \ \ \ \mbox{ for all $s \in \NN$ and for all $\varepsilon \in (0,1]$,}
\end{equation}
i.e., if the information complexity is bounded polynomially in $\varepsilon^{-1}$ and uniformly in $s$. The infimum over all $\tau$ such that \eqref{def:SPT} holds is called the {\it exponent of strong polynomial tractability}. We denote it by $\tau^*$.
\end{definition}

The following lemma can be applied to the worst-case error bounds from this paper. 

\begin{lemma}\label{le:bd:info_comp}
Consider a generic sequence of integration problems $(I_s: \mathcal{H}_s \rightarrow \RR)_{s \ge 1}$, where $\mathcal{H}_s$ is a normed space of $s$-variate functions, with worst-case error ${\rm wce}$. Assume we have for every $s,m \in \NN$ a $2^m$-element point set $\cP_{s,m}$ in $[0,1]^s$ such that $${\rm wce}(\cP_{s,m}) \le \frac{C}{2^{m(1-\delta)}}$$ for some $\delta \in (0,1]$ and absolute constant $C>0$ (independent of $m$ and $s$, but which may depend on $\delta$ and on maybe other parameters). Then we have strong polynomial tractability with exponent $\tau^*$ at most $1/(1-\delta)$. 
\end{lemma}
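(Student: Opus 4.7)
The proof plan is essentially a direct inversion of the given convergence rate, exploiting the fact that the constant $C$ does not depend on the dimension $s$. The target is to produce, for every $\varepsilon \in (0,1]$ and every $s \in \NN$, an explicit point set whose worst-case error is at most $\varepsilon$, and then count its cardinality.

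First I would fix $s$ and $\varepsilon$, and choose the smallest integer $m \in \NN$ for which the hypothesis guarantees
\[
\frac{C}{2^{m(1-\delta)}} \;\le\; \varepsilon,
\]
namely $m := \lceil \frac{1}{1-\delta} \log_2(C/\varepsilon) \rceil$ (with the obvious adjustment if $C \le \varepsilon$, in which case already $m=0$ or $m=1$ suffices because we work with $2^m$-point sets). By assumption, the associated point set $\cP_{s,m}$ has $\mathrm{wce}(\cP_{s,m}) \le \varepsilon$, so by definition of the information complexity,
\[
N_{\bsgamma}(\varepsilon,s) \;\le\; |\cP_{s,m}| \;=\; 2^m.
\]

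Next I would bound $2^m$. From the ceiling, $m \le 1 + \frac{1}{1-\delta}\log_2(C/\varepsilon)$, so
\[
2^m \;\le\; 2 \cdot (C/\varepsilon)^{1/(1-\delta)} \;=\; 2\, C^{1/(1-\delta)}\, \varepsilon^{-1/(1-\delta)}.
\]
Setting $\widetilde{C} := 2\, C^{1/(1-\delta)}$ and $\tau := 1/(1-\delta)$, we obtain
\[
N_{\bsgamma}(\varepsilon,s) \;\le\; \widetilde{C}\, \varepsilon^{-\tau} \qquad \text{for all } s \in \NN \text{ and } \varepsilon \in (0,1],
\]
which is precisely \eqref{def:SPT}. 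Crucially, neither $\widetilde{C}$ nor $\tau$ depends on $s$, because $C$ in the hypothesis was assumed to be an absolute constant.

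Finally, since the above shows that every $\tau > 1/(1-\delta)$ satisfies \eqref{def:SPT} (and in fact so does $\tau = 1/(1-\delta)$ itself), the infimum $\tau^*$ over admissible exponents is at most $1/(1-\delta)$, as claimed. There is no real obstacle here beyond being careful with the ceiling when inverting the error bound; the whole argument is a one-line counting estimate once $m$ is chosen, and the $s$-uniformity is inherited directly from the $s$-uniformity of $C$ in the assumption.
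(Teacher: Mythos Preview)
Your proposal is correct and follows essentially the same route as the paper: choose $m=\lceil \frac{1}{1-\delta}\log_2(C/\varepsilon)\rceil$, verify that $\mathrm{wce}(\cP_{s,m})\le\varepsilon$, and bound $2^m\le 2(C/\varepsilon)^{1/(1-\delta)}$ to obtain $N_{\bsgamma}(\varepsilon,s)\le 2\,C^{1/(1-\delta)}\varepsilon^{-1/(1-\delta)}$ uniformly in $s$. The only differences are cosmetic---you are slightly more explicit about the edge case $C\le\varepsilon$ and about why the exponent infimum is attained.
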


\begin{proof}
Let $\varepsilon > 0$. Choose $$m=\left\lceil \frac{\ld(C \varepsilon^{-1})}{1-\delta}\right\rceil .$$ Then $${\rm wce}(\cP) \le \frac{C}{2^{m(1-\delta)}} \le \varepsilon.$$ This implies that $$N_{\bsgamma}(\varepsilon,s) \le 2^{\left\lceil \frac{\ld(C \varepsilon^{-1})}{1-\delta}\right\rceil} \le 2 (C \varepsilon^{-1})^{1/(1-\delta)}.$$ Hence we achieve strong polynomial tractability with exponent $\tau^*$ at most $1/(1-\delta)$. 
\end{proof}

In the following we will look at several constructions of digital nets and check how the weights have to be chosen in order to satisfy the condition in Lemma~\ref{le:bd:info_comp} and to achieve strong polynomial tractability.

\subsection{Niederreiter and Sobol' digital sequences}

A number of explicit constructions of digital nets are known. For a survey we refer to \cite[Chapter~8]{DP10}. In the following we study the dependence of the worst-case error on the dimension for two well established explicit constructions of digital nets for general weights.

There are explicit constructions of digital $((t_\uu)_\uu, m, s)$-nets over $\mathbb{F}_2$ due to Sobol'~\cite{sob1967} and Niederreiter~\cite{nie88}. For nets obtained from a Niederreiter sequence the quality parameter $t_\uu$ satisfies
\begin{equation*}
t_\uu \le \sum_{i \in \uu} \left( \ld (i) + \ld \ld (i+2) + 2 \right),
\end{equation*}
where $\ld$ denotes the logarithm in base $2$, see \cite[Lemma~2]{W02}. Hence
\begin{equation*}
2^{t_\uu} \le \prod_{i \in \uu} (4 \, i \, \ld (i + 2) ).
\end{equation*}

Using this bound, we obtain from Theorem~\ref{thm2}
\begin{equation}\label{corthm2:tract}
{\rm wce}(\cP; p,s,\bsgamma, \varphi)\le \frac{3}{2^m}\,\left(\sum_{\emptyset \neq \uu \subseteq [s]}  \left(\gamma_\uu \, m^{|\uu|}\, \prod_{i \in \uu} \left[8\, i\, \ld(i+2)\, (b_i - a_i)\right]\right)^q \right)^{1/q},
\end{equation}
where $1 \le p, q \le \infty$ and $\frac{1}{p}+\frac{1}{q}=1$, with the obvious modifications for $q = \infty$.

\begin{theorem}\label{thm_dignet_prod}
Let $\cP$ be a digital net obtained from a Niederreiter sequence. Assume there exists a $\delta \in (0,1)$ such that 
\begin{equation}\label{cond:weight}
C_{{\rm Nied}}(\delta,\bsgamma,q) :=\sup_{s \in \NN} \left( \sum_{\emptyset \neq \uu \subseteq [s]}  \left(\gamma_\uu \,  \prod_{i \in \uu} \left[\frac{8 \, |\uu|\, i}{{\rm e}\, \delta\, \ln 2} \, \ld(i+2)\, (b_i - a_i)\right]\right)^q \right)^{1/q} < \infty,
\end{equation}
with the obvious modifications for $q = \infty$. Then we have 
\begin{equation}\label{errbd:tract}
{\rm wce}(\cP; p,s,\bsgamma, \varphi)\le \frac{3\, C_{{\rm Nied}}(\delta,\bsgamma,q)}{2^{m(1-\delta)}},
\end{equation}
where $1 \le p, q \le \infty$ and $\frac{1}{p}+\frac{1}{q}=1$.

For strictly positive weights $\gamma_{\uu}$, if
\begin{equation}\label{cond:posweight}
S_{{\rm Nied}}(\bsgamma):=\sum_{i=1}^{\infty} [i\, \ld(i+2)\, (b_i-a_i)]\, \max_{\vv \subseteq [i-1]} \frac{\gamma_{\vv \cup \{i\}}}{\gamma_{\vv}} < \infty,
\end{equation}
where in the above $\gamma_\emptyset:=1$, then for any 
\begin{equation*}
\delta \in \left(0,\min\left\{1,\frac{16 \, S_{{\rm Nied}}(\bsgamma)}{\ln 2}\right\}\right)
\end{equation*}
we have 
\begin{equation*}
{\rm wce}(\cP; p,s,\bsgamma, \varphi)\le \frac{C_{{\rm Nied,prod}}(\delta,\bsgamma)}{2^{m(1-\delta)}}, 
\end{equation*}
where $$C_{{\rm Nied,prod}}(\delta,\bsgamma) = 3 \left( \frac{16\, S_{{\rm Nied}}(\bsgamma)}{\delta \, \ln 2} \right)^b,$$ and the integer $b$ is such that 
\begin{equation*}
\sum_{i=b+1}^\infty [i\, \ld(i+2)\, (b_i - a_i)] \,\max_{\vv \subseteq [i-1]} \frac{\gamma_{\vv \cup \{i\}}}{\gamma_{\vv}} < \frac{\delta\, \ln 2}{16}.
\end{equation*}
\end{theorem}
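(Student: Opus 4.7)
The first claim follows from a direct calculus manipulation of \eqref{corthm2:tract}. The function $m \mapsto m^k 2^{-m\delta}$ is maximised at $m^\star = k/(\delta \ln 2)$, so for every $m,k \ge 0$ and $\delta > 0$,
\begin{equation*}
m^k \le \left(\frac{k}{\mathrm{e}\,\delta\,\ln 2}\right)^k 2^{m\delta}.
\end{equation*}
Applying this with $k=|\uu|$ in the bound \eqref{corthm2:tract}, and absorbing $|\uu|^{|\uu|}=\prod_{i\in\uu}|\uu|$ into the product over $i\in\uu$, the right-hand side rearranges exactly into $3\,C_{\mathrm{Nied}}(\delta,\bsgamma,q)\,2^{-m(1-\delta)}$, giving \eqref{errbd:tract}.

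The second claim reduces to showing that $S_{\mathrm{Nied}}(\bsgamma)<\infty$ together with the choice of $b$ implies $C_{\mathrm{Nied}}(\delta,\bsgamma,q)\le (16\,S_{\mathrm{Nied}}(\bsgamma)/(\delta\ln 2))^{b}$. First, since the $\ell^q$-norm of a non-negative sequence is non-increasing in $q\ge 1$, it suffices to handle $q=1$. Second, along any increasing chain $\emptyset\subsetneq\{i_1\}\subsetneq\ldots\subsetneq\uu$, the general weight telescopes as
\begin{equation*}
\gamma_\uu \;=\; \prod_{j=1}^{|\uu|}\frac{\gamma_{\{i_1,\ldots,i_j\}}}{\gamma_{\{i_1,\ldots,i_{j-1}\}}} \;\le\; \prod_{i\in\uu} r_i,\qquad r_i:=\max_{\vv\subseteq[i-1]}\frac{\gamma_{\vv\cup\{i\}}}{\gamma_\vv}.
\end{equation*}
Setting $a_i := r_i\cdot 8i\,\ld(i+2)(b_i-a_i)/(\mathrm{e}\,\delta\,\ln 2)$, the problem reduces to bounding $\sum_{\emptyset\ne\uu}|\uu|^{|\uu|}\prod_{i\in\uu} a_i$, where $\sum_i a_i = 8S_{\mathrm{Nied}}/(\mathrm{e}\,\delta\,\ln 2)$ and the hypothesis on $b$ translates directly into $\sum_{i>b} a_i < 1/(2\mathrm{e})$.

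To finish, I would split each $\uu$ into $\uu_L:=\uu\cap[b]$ and $\uu_H:=\uu\cap\{b+1,b+2,\ldots\}$ and sum over the two pieces separately. The low contribution produces the explicit factor $(16 S_{\mathrm{Nied}}/(\delta\ln 2))^{b}$ by using $|\uu_L|\le b$ in the $|\uu|$-factors that concern indices in $\uu_L$. The high contribution is handled via Stirling's estimate $|\uu_H|^{|\uu_H|}\le \mathrm{e}^{|\uu_H|}|\uu_H|!$ combined with the elementary-symmetric inequality $e_k((a_i)_{i>b})\le (\sum_{i>b}a_i)^k/k!$; this produces a geometric series in $\mathrm{e}\sum_{i>b}a_i<1/2$, which sums to a bounded constant that is absorbed into the stated $C_{\mathrm{Nied,prod}}$.

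The main obstacle is the POD-type factor $|\uu|^{|\uu|}$, which arises because the Niederreiter bound for $2^{t_\uu}$ together with the $m$-absorption step of Part 1 inserts a $|\uu|$ into each factor of the product over $i\in\uu$. With only the chain control $\gamma_\uu\le\prod r_i$ at our disposal, this factor would grow faster than any pure product summation can handle. The tail-splitting at level $b$ with the explicit constant $16$ is precisely tuned to force $\mathrm{e}\sum_{i>b}a_i<1/2$, which is what makes the remaining series convergent and gives the explicit dependence on $b$ in $C_{\mathrm{Nied,prod}}$.
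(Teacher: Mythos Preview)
Your argument for the first claim is identical to the paper's, and is correct.

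For the second claim your route diverges from the paper's and contains a genuine gap. The paper does \emph{not} first absorb $m^{|\uu|}$ via the calculus bound from Part~1; instead it returns to \eqref{corthm2:tract}, sets $\nu_\uu=\gamma_\uu\prod_{i\in\uu}[8i\,\ld(i+2)(b_i-a_i)]$, and applies two cited lemmas in sequence: first \cite[Lemma~4]{EKNO},
\[
\sum_{\emptyset\ne\uu\subseteq[s]}\nu_\uu\,m^{|\uu|}\le\prod_{i=1}^s(1+\widetilde\nu_i m),\qquad \widetilde\nu_i:=\max_{\vv\subseteq[i-1]}\frac{\nu_{\vv\cup\{i\}}}{\nu_\vv},
\]
and then \cite[Lemma~4.4]{KSS11}, which gives $\prod_i(1+\widetilde\nu_i m)\le a^b 2^{\delta m}$ with $a=\tfrac{2}{\delta\ln 2}\sum_i\widetilde\nu_i$ and $b$ as in the statement. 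Keeping the factor $m^{|\uu|}$ intact until after the product step is exactly what avoids the troublesome $|\uu|^{|\uu|}$.

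Your approach, by contrast, applies the Part~1 inequality first and is then left with $\sum_{\uu}|\uu|^{|\uu|}\prod_{i\in\uu}a_i$. The low/high splitting you sketch does not work as written: writing $|\uu|^{|\uu|}=|\uu|^{|\uu_L|}\cdot|\uu|^{|\uu_H|}$, you implicitly bound $|\uu|^{|\uu_L|}\le b^{|\uu_L|}$ and $|\uu|^{|\uu_H|}\le|\uu_H|^{|\uu_H|}$, but both are \emph{lower} bounds, not upper bounds, since $|\uu|=|\uu_L|+|\uu_H|$ can exceed both $b$ and $|\uu_H|$. One can rescue a splitting via $(l+h)^{l+h}\le(2e)^{l+h}l^l h^h$, but the extra $(2e)^{|\uu|}$ then forces the stronger tail condition $\sum_{i>b}a_i<1/(2e^2)$ rather than $1/(2e)$, and the resulting constant no longer matches the stated $C_{\mathrm{Nied,prod}}(\delta,\bsgamma)=3(16\,S_{\mathrm{Nied}}(\bsgamma)/(\delta\ln 2))^b$. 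So as written the argument neither closes nor recovers the theorem's constants; the paper's product-then-absorb route is what makes the explicit constant come out exactly.
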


\begin{proof}
For $\delta >0$, the real function $f(x)=\frac{x^{|\uu|}}{2^{\delta x}}$ attains its maximum in $x=|\uu|/(\delta \ln 2)$ and hence 
\begin{equation}\label{absch:mhu}
m^{|\uu|} \le 2^{\delta m} f\left(\frac{|\uu|}{\delta \ln 2}\right) = 2^{\delta m} \left(\frac{|\uu|}{{\rm e}\, \delta \, \ln 2 } \right)^{|\uu|}.
\end{equation}
Inserting this estimate into \eqref{corthm2:tract} we obtain
\begin{eqnarray*}
{\rm wce}(\cP; p,s,\bsgamma, \varphi) & \le & \frac{3}{2^{m(1-\delta)}}\,\left(\sum_{\emptyset \neq \uu \subseteq [s]}  \left(\gamma_\uu \,  \prod_{i \in \uu} \left[\frac{8\, |\uu|\, i}{{\rm e} \, \delta\, \ln 2} \, \ld(i+2)\, (b_i - a_i)\right]\right)^q \right)^{1/q}\\
& \le & \frac{3 \, C_{{\rm Nied}}(\delta,\bsgamma,q)}{2^{m(1-\delta)}}.
\end{eqnarray*}
This proves \eqref{errbd:tract}.

Now we prove the assertion for strictly positive weights. Assume that all weights $\gamma_\uu$ are strictly positive. Using Jensen's inequality, \eqref{corthm2:tract} can be re-written as
\begin{equation*}
{\rm wce}(\cP; p,s,\bsgamma, \varphi)  \le  \frac{3}{2^m}\, \sum_{\emptyset \neq \uu \subseteq [s]}  \nu_{\uu} m^{|\uu|},  
\end{equation*}
where $$\nu_{\uu}:=\gamma_{\uu} \prod_{i \in \uu} \left[8\, i\, \ld(i+2)\, (b_i - a_i)\right].$$
Now we use \cite[Lemma~4, Eq. (15)]{EKNO} which states that $$\sum_{\emptyset \neq \uu \subseteq [s]}  \nu_{\uu} m^{|\uu|} \le \prod_{i=1}^{s} (1+\widetilde{\nu}_i m),\quad \mbox{ where }\, \widetilde{\nu}_i:= \max_{\vv \subseteq [i-1]} \frac{\nu_{\vv \cup \{i\}}}{\nu_{\vv}}.$$ This implies 
\begin{equation}
{\rm wce}(\cP; p,s,\bsgamma, \varphi) \le \frac{3}{2^m}\, \prod_{i=1}^{s} (1+\widetilde{\nu}_i m).
\end{equation}
For any $\delta \in (0,\min\{1,\frac{2}{\ln 2} \sum_{i=1}^{\infty} \widetilde{\nu}_i\})$, where $a:=\frac{2}{\delta \ln 2} \sum_{i=1}^{\infty}\widetilde{\nu}_i $ and $b$ such that $\sum_{i>b} \widetilde{\nu}_i < \frac{\delta \ln 2}{2}$, we obtain from \cite[Lemma~4.4]{KSS11} that 
$$\prod_{i=1}^{s} (1+\widetilde{\nu}_i m) \le a^b 2^{\delta m}.$$
From this we obtain 
\begin{equation*}
{\rm wce}(\cP; p,s,\bsgamma, \varphi) \le \frac{3\, a^b}{2^{m(1-\delta)}}.
\end{equation*}

It remains to re-write the condition on the weights. Obviously,
$$\widetilde{\nu}_i= [8 \, i \, \ld(i+2) (b_i-a_i)] \max_{\vv \subseteq [i-1]} \frac{\gamma_{\vv \cup \{i\}}}{\gamma_{\vv}}.$$ Hence we require the weight condition 
$$S_{{\rm Nied}}(\bsgamma):=\sum_{i=1}^\infty [i \, \ld(i+2) \, (b_i-a_i)]\, \max_{\vv \subseteq [i-1]} \frac{\gamma_{\vv \cup \{i\}}}{\gamma_{\vv}} < \infty.$$ In this case we have for any $$\delta \in \left(0,\min\left\{1, \frac{16\, S_{{\rm Nied}}(\bsgamma)}{\ln 2} \right\} \right),$$ that $a$ is given by  $$ a=\frac{16\, S_{{\rm Nied}}(\bsgamma)}{\delta \, \ln 2},$$ and $b$ is such that $$\sum_{i>b} [i\, \ld(i+2)\, (b_i-a_i)]\,\max_{\vv \subseteq [i-1]} \frac{\gamma_{\vv \cup \{i\}}}{\gamma_{\vv}} < \frac{\delta \, \ln 2}{16}.$$ This proves the desired result.
\end{proof}

\begin{remark}\rm\label{condweightprodPOD}
For product weights $\gamma_\uu =\prod_{i \in \uu} \gamma_i$ condition \eqref{cond:posweight} is equivalent to $$\sum_{i=1}^{\infty} [\gamma_i \, i \, \ld(i+2)\, (b_i-a_i)]< \infty.$$
For POD weights $\gamma_\uu= \Gamma_{|\uu|}\prod_{i \in \uu} \gamma_i $ we have $$\max_{\vv \subseteq [i-1]} \frac{\gamma_{\vv \cup \{i\}}}{\gamma_{\vv}}=\gamma_i \, \max_{v \in [i-1]} \frac{\Gamma_{v+1}}{\Gamma_{v}}$$ and hence condition \eqref{cond:posweight} is equivalent to $$\sum_{i=1}^{\infty} [\gamma_i \, i \, \ld(i+2)\, (b_i-a_i)] \,\max_{v \in [i-1]} \frac{\Gamma_{v+1}}{\Gamma_{v}} < \infty.$$ For example, if $\Gamma_t=(t!)^\lambda$ with some $\lambda>0$, then $\max_{v \in [i-1]} \frac{\Gamma_{v+1}}{\Gamma_{v}}=i^\lambda$ and then condition \eqref{cond:posweight} is equivalent to $$\sum_{i=1}^{\infty} [\gamma_i \, i^{1+\lambda} \, \ld(i+2)\, (b_i-a_i)] < \infty.$$
\end{remark}

For nets obtained from a Sobol' sequence, the quality parameter $t_\uu$ satisfies (see \cite[Eq.~15]{W02b})
\begin{equation}\label{tuSob}
t_\uu \le \sum_{i \in \uu} \left( \ld(i) + \ld \ld (i+1) + \ld \ld \ld (i+3) + c \right),
\end{equation}
for some constant $c >0$ independent of $i, \uu$ and $s$. Hence, in the same way as above, we obtain the following theorem.

\begin{theorem}
Let $\cP$ be a digital net obtained from a Sobol' sequence. Assume there exists a $\delta \in (0,1)$ such that 
\begin{equation*}
C_{{\rm Sob}}(\delta,\bsgamma,q) :=\sup_{s \in \NN}  \left( \sum_{\emptyset \neq \uu \subseteq [s]}  \left(\gamma_\uu \,  \prod_{i \in \uu} \left[\frac{2^{c+1} \, |\uu|\, i }{{\rm e}\, \delta\, \ln 2} \, \ld(i+1)\, \ld\ld(i+3)\, (b_i - a_i)\right]\right)^q \right)^{1/q} < \infty,
\end{equation*}
where $c$ is from \eqref{tuSob}, with the obvious modifications for $q = \infty$. Then we have 
\begin{equation*}
{\rm wce}(\cP; p,s,\bsgamma, \varphi)\le \frac{3\, C_{{\rm Sob}}(\delta,\bsgamma,q)}{2^{m(1-\delta)}},
\end{equation*}
where $1 \le p, q \le \infty$ and $\frac{1}{p}+\frac{1}{q}=1$.

For strictly positive weights, if 
\begin{equation}\label{weightcondSob}
S_{{\rm Sob}}(\bsgamma):= \sum_{i = 1}^\infty \left[i \,  \ld (i+1)\, \ld\ld(i+3) \, (b_i - a_i)\right] \, \max_{\vv \subseteq [i-1]} \frac{\gamma_{\vv \cup \{i\}}}{\gamma_{\vv}}  < \infty,
\end{equation}
where in the above $\gamma_\emptyset:=1$, then for any 
\begin{equation*}
\delta \in \left(0,\min\left\{1,\frac{2^{c+2} S_{{\rm Sob}}(\bsgamma)}{\ln 2} \right\}\right)
\end{equation*}
we have 
\begin{equation*}
{\rm wce}(\cP; p,s,\bsgamma, \varphi)\le \frac{C_{{\rm Sob, prod}}(\delta,\bsgamma)}{2^{m(1-\delta)}}, 
\end{equation*}
where $$C_{{\rm Sob,prod}}(\delta,\bsgamma) = 3 \left( \frac{2^{c+2}S_{{\rm Sob}}(\bsgamma)}{\delta \, \ln 2} \right)^b ,$$ and the integer $b$ is such that 
\begin{equation*}
\sum_{i=b+1}^\infty [i\, \ld(i+1)\, \ld\ld(i+3)\, (b_i - a_i)]\, \max_{\vv \subseteq [i-1]} \frac{\gamma_{\vv \cup \{i\}}}{\gamma_{\vv}}  < \frac{\delta\, \ln 2}{2^{c+2}}.
\end{equation*}
\end{theorem}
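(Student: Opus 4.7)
The plan is to mimic the proof of Theorem~\ref{thm_dignet_prod} almost verbatim, simply replacing the Niederreiter bound on $t_\uu$ with the Sobol' bound from \eqref{tuSob}. First I would exponentiate \eqref{tuSob} to obtain
\begin{equation*}
2^{t_\uu} \le \prod_{i \in \uu} \left[ 2^{c}\, i \, \ld(i+1)\, \ld\ld(i+3) \right],
\end{equation*}
and plug this into Theorem~\ref{thm2}. This yields the Sobol' analogue of \eqref{corthm2:tract}, namely
\begin{equation*}
{\rm wce}(\cP;p,s,\bsgamma,\varphi) \le \frac{3}{2^m}\left( \sum_{\emptyset \neq \uu \subseteq [s]} \left( \gamma_\uu\, m^{|\uu|} \prod_{i \in \uu} \left[ 2^{c+1}\, i\, \ld(i+1)\, \ld\ld(i+3)\, (b_i-a_i) \right] \right)^{q} \right)^{1/q}.
\end{equation*}

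Next I would apply the bound \eqref{absch:mhu}, i.e.\ $m^{|\uu|} \le 2^{\delta m}(|\uu|/({\rm e}\,\delta\,\ln 2))^{|\uu|}$, to absorb the factor $m^{|\uu|}$ into a factor $2^{\delta m}$ times a term that can be matched with the definition of $C_{\rm Sob}(\delta,\bsgamma,q)$. Taking the supremum over $s$ inside gives the first claim of the theorem.

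For the second claim, assuming all weights are strictly positive, I would follow the Niederreiter argument step by step: by Jensen's inequality, reduce to the $q=1$-type expression $\sum_{\emptyset \neq \uu \subseteq [s]} \nu_\uu m^{|\uu|}$ with $\nu_\uu = \gamma_\uu \prod_{i \in \uu}[2^{c+1}\, i\, \ld(i+1)\, \ld\ld(i+3)\,(b_i-a_i)]$, then invoke \cite[Lemma~4]{EKNO} to obtain
\begin{equation*}
\sum_{\emptyset \neq \uu \subseteq [s]} \nu_\uu\, m^{|\uu|} \le \prod_{i=1}^{s} \bigl( 1 + \widetilde{\nu}_i\, m \bigr), \qquad \widetilde{\nu}_i = \max_{\vv \subseteq [i-1]} \frac{\nu_{\vv \cup \{i\}}}{\nu_{\vv}},
\end{equation*}
and then \cite[Lemma~4.4]{KSS11} to conclude that the product is bounded by $a^b\,2^{\delta m}$ for any admissible $\delta$. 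Rewriting $\widetilde{\nu}_i = [2^{c+1}\, i\, \ld(i+1)\, \ld\ld(i+3)\,(b_i-a_i)] \max_{\vv \subseteq [i-1]} \gamma_{\vv \cup \{i\}}/\gamma_{\vv}$ and substituting into the definitions of $a$ and $b$ yields the weight condition $S_{\rm Sob}(\bsgamma)<\infty$ and the stated constant $C_{\rm Sob, prod}(\delta,\bsgamma)$.

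There is no real obstacle beyond the bookkeeping of constants; the main point is to verify that the factor $8$ in the Niederreiter constants (which came from $4\cdot 2$) is replaced by $2^{c+1}$ coming from \eqref{tuSob}, and that the Sobol' weight factor $i\,\ld(i+1)\,\ld\ld(i+3)$ replaces the Niederreiter factor $i\,\ld(i+2)$ throughout. Everything else is identical to the proof of Theorem~\ref{thm_dignet_prod}.
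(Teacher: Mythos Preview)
Your proposal is correct and follows exactly the approach indicated in the paper, which simply states that the Sobol' result is obtained ``in the same way as above'' from the bound \eqref{tuSob}. Your bookkeeping of the constants (replacing the Niederreiter factor $8\,i\,\ld(i+2)$ by $2^{c+1}\,i\,\ld(i+1)\,\ld\ld(i+3)$, and hence $16$ by $2^{c+2}$ in the definitions of $a$, $b$, and the admissible range of $\delta$) matches the stated theorem precisely.
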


Like in Remark~\ref{condweightprodPOD} one may easily re-write the weight condition \eqref{weightcondSob} for product- and POD weights.

\subsection{Polynomial lattices}

We also consider polynomial lattices. Again we restrict ourselves to the case $p=\infty$ and hence $q=1$.

\begin{theorem}\label{thm_spt_prod}
Let  $f \in \FF_2[x]$ be irreducible with $\deg(f) =m$. Suppose that $\bsg^* \in G_m^s$ is constructed according to Algorithm~\ref{alg_weight}. 

Assume there exists a $\delta \in (0,1)$ such that 
\begin{equation*}
C_{{\rm Poly}}(\delta,\bsgamma) :=\sup_{s \in \NN} \sum_{\emptyset \neq \uu \subseteq [s]}   \gamma_\uu \, \prod_{i \in \uu} \left[\max\left(2,\frac{|\uu|}{{\rm e}\, \delta \, 2 \ln 2}\right) \,(b_i - a_i)\right] < \infty.
\end{equation*}
Then we have 
\begin{equation*}
{\rm wce}(\cP(\bsg^*,f); \infty,d,\bsgamma, \varphi) \le  \frac{2\, C_{{\rm Poly}}(\delta,\bsgamma)}{2^{m(1-\delta)}}.
\end{equation*}

For strictly positive weights $\gamma_\uu$, if 
\begin{equation}\label{condweightposPoly}
S(\bsgamma):=\sum_{i = 1}^\infty (b_i - a_i) \, \max_{\vv \subseteq [i-1]} \frac{\gamma_{\vv \cup \{i\}}}{\gamma_{\vv}} < \infty,
\end{equation}
where $\gamma_\emptyset:=1$, then for any 
\begin{equation*}
\delta \in \left(0,\min\left\{1, \frac{5\, S(\bsgamma)}{\ln 2}\right\}\right)
\end{equation*}
we have  
$${\rm wce}(\cP(\bsg^*,f); \infty,d,\bsgamma, \varphi)  \le  \frac{C_{{\rm Poly,prod}}(\delta,\bsgamma)}{2^{m(1-\delta)}},$$ where
$$C_{{\rm Poly,prod}}(\delta,\bsgamma)= \left[\frac{5\, S(\bsgamma)}{\delta \, \ln 2} \right]^b,$$ and the integer $b$ is such that 
\begin{equation*}
\sum_{i=b+1}^\infty (b_i - a_i) \, \max_{\vv \subseteq [i-1]} \frac{\gamma_{\vv \cup \{i\}}}{\gamma_{\vv}}< \frac{\delta\, \ln 2}{5}.
\end{equation*}
\end{theorem}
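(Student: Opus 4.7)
\medskip

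\noindent\textbf{Proof plan for Theorem~\ref{thm_spt_prod}.}
The plan is to start from Corollary~\ref{co1} specialised to $p=\infty$, $q=1$, which asserts
\begin{equation*}
\mathrm{wce}(\cP(\bsg^\ast,f); \infty, s, \bsgamma, \varphi) \le \frac{1}{2^m}\sum_{\emptyset\neq \uu \subseteq [s]}\gamma_\uu \left(2^{|\uu|}+\left(\frac{m}{2}\right)^{|\uu|}\right)\prod_{i\in\uu}(b_i-a_i),
\end{equation*}
and then to run, mutatis mutandis, the two-stage argument used to prove Theorem~\ref{thm_dignet_prod}.

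For the first assertion I would apply the calculus bound \eqref{absch:mhu}, i.e.\ $m^{|\uu|}\le 2^{\delta m}(|\uu|/({\rm e}\,\delta\ln 2))^{|\uu|}$, to the $(m/2)^{|\uu|}$-summand, yielding
\begin{equation*}
\left(\frac{m}{2}\right)^{|\uu|}\frac{1}{2^m} \le \frac{1}{2^{m(1-\delta)}}\left(\frac{|\uu|}{{\rm e}\,\delta\, 2\ln 2}\right)^{|\uu|},
\end{equation*}
while $2^{|\uu|}/2^m \le 2^{|\uu|}/2^{m(1-\delta)}$ trivially. Combining via $\alpha^{|\uu|}+\beta^{|\uu|}\le 2\max(\alpha,\beta)^{|\uu|}$ and distributing the maximum over coordinates using $\max(\alpha,\beta)^{|\uu|}\prod_{i\in\uu}(b_i-a_i)=\prod_{i\in\uu}[\max(\alpha,\beta)(b_i-a_i)]$ produces
\begin{equation*}
\mathrm{wce}(\cP(\bsg^\ast,f); \infty, s, \bsgamma, \varphi) \le \frac{2}{2^{m(1-\delta)}} \sum_{\emptyset\neq\uu\subseteq[s]} \gamma_\uu \prod_{i\in\uu}\left[\max\!\left(2, \frac{|\uu|}{{\rm e}\,\delta\, 2\ln 2}\right)(b_i-a_i)\right],
\end{equation*}
which is exactly $2\,C_{\rm Poly}(\delta,\bsgamma)/2^{m(1-\delta)}$, as required.

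For the second assertion (strictly positive weights) the plan is to mirror the strictly-positive-weight half of the proof of Theorem~\ref{thm_dignet_prod}. First combine the two summands via $2^{|\uu|}+(m/2)^{|\uu|}\le (m/2+2)^{|\uu|}$ (using $a^k+b^k\le (a+b)^k$ for $a,b\ge 0$) and absorb the $2^{|\uu|}$-factor into new weights $\nu_\uu:=\gamma_\uu\prod_{i\in\uu}((b_i-a_i)/2)$, arriving at a bound of the form $\mathrm{wce}\le \frac{1}{2^m}\sum_\uu \nu_\uu (m+4)^{|\uu|}$. Next invoke \cite[Lemma~4]{EKNO} to obtain $\sum_\uu \nu_\uu (m+4)^{|\uu|}\le \prod_{i=1}^s (1+\widetilde\nu_i(m+4))$ with $\widetilde\nu_i=\max_{\vv\subseteq[i-1]}\nu_{\vv\cup\{i\}}/\nu_\vv$; a direct computation gives $\widetilde\nu_i=\tfrac{1}{2}(b_i-a_i)\max_{\vv\subseteq[i-1]}\gamma_{\vv\cup\{i\}}/\gamma_\vv$, so $\sum_i \widetilde\nu_i$ is a constant multiple of $S(\bsgamma)$. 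Finally apply \cite[Lemma~4.4]{KSS11} exactly as in the proof of Theorem~\ref{thm_dignet_prod} to bound $\prod_{i=1}^s(1+\widetilde\nu_i(m+4))\le a^b\,2^{\delta(m+4)}$ for appropriate $a$ and $b$, and absorb the constant factor $2^{4\delta}\le 16$ into the multiplicative constant; tracking the relation between $a$, $S(\bsgamma)$ and $\delta$ then gives a bound of the claimed form $(C\, S(\bsgamma)/(\delta\ln 2))^b/2^{m(1-\delta)}$.

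The main technical obstacle is bookkeeping of constants so that the precise factor $5/(\delta\ln 2)$ announced in the statement actually emerges. The value $5$ depends sensitively on how one dominates $2^{|\uu|}+(m/2)^{|\uu|}$ and on how the shift $m\mapsto m+c$ is absorbed into the final constant; the crude combination sketched above (with $c=4$, leading to an extra factor $2^{4\delta}$) will produce a larger constant than $5$. To reach the sharper value stated, one will likely need a slightly more delicate bookkeeping, for instance splitting the two summands $2^{|\uu|}$ and $(m/2)^{|\uu|}$ from the outset, applying the \cite{EKNO}--\cite{KSS11} machinery separately, and only then combining the resulting two pieces.
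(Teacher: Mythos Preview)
Your proof of the first assertion is essentially the paper's argument: start from Corollary~\ref{co1}, apply \eqref{absch:mhu} to the term $(m/2)^{|\uu|}$, and bound $2^{|\uu|}+(m/2)^{|\uu|}\le(1+2^{\delta m})\max(2,|\uu|/({\rm e}\,\delta\,2\ln 2))^{|\uu|}$, then use $(1+2^{\delta m})/2^m\le 2/2^{m(1-\delta)}$. Your version is a cosmetic variant of this.

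For the second assertion your plan is correct in outline, but you are making the constants harder to track than necessary. The paper's route is simpler than the ``more delicate bookkeeping'' you anticipate: rather than writing $2^{|\uu|}+(m/2)^{|\uu|}\le((m+4)/2)^{|\uu|}$ and then carrying the shift $m\mapsto m+4$ through the \cite{KSS11} lemma (picking up the stray factor $2^{4\delta}$), the paper uses the cruder step $2+m/2\le 5m/2$ (valid for every $m\ge 1$), giving
\[
2^{|\uu|}+\left(\frac{m}{2}\right)^{|\uu|}\le \left(2+\frac{m}{2}\right)^{|\uu|}\le \left(\frac{5m}{2}\right)^{|\uu|}.
\]
Thus one takes $\nu_\uu:=\gamma_\uu\prod_{i\in\uu}\bigl[\tfrac{5}{2}(b_i-a_i)\bigr]$ and works with $m$ itself, not $m+4$. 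Then $\widetilde\nu_i=\tfrac{5}{2}(b_i-a_i)\max_{\vv\subseteq[i-1]}\gamma_{\vv\cup\{i\}}/\gamma_\vv$, so $\sum_i\widetilde\nu_i=\tfrac{5}{2}S(\bsgamma)$, and \cite[Lemma~4.4]{KSS11} gives $a=\tfrac{2}{\delta\ln 2}\sum_i\widetilde\nu_i=\tfrac{5\,S(\bsgamma)}{\delta\ln 2}$ on the nose. No splitting of the two summands, no extra $2^{4\delta}$ factor, and the constant $5$ falls out immediately.
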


\begin{proof}
Suppose that $\bsg^*\in G_m^s$ is constructed according to Algorithm~\ref{alg_weight}. From Corollary~\ref{co1} and \eqref{absch:mhu} we obtain  
\begin{eqnarray*}
{\rm wce}(\cP(\bsg^*,f); \infty,d,\bsgamma, \varphi) & \le & \frac{1}{2^m} \sum_{\emptyset \neq \uu \subseteq [s]}  \gamma_\uu \, \left(2^{|\uu|} + \left(\frac{m}{2}\right)^{|\uu|} \right)\, \prod_{i \in \uu} (b_i - a_i)\\
& \le & \frac{1+2^{\delta m}}{2^m} \sum_{\emptyset \neq \uu \subseteq [s]}  \gamma_\uu \, \prod_{i \in \uu} \left[\max\left(2,\frac{|\uu|}{{\rm e}\, \delta \, 2 \ln 2}\right) \,(b_i - a_i)\right]\\
& \le & \frac{2\, C_{{\rm Poly}}(\delta,\bsgamma)}{2^{m(1-\delta)}}.  
\end{eqnarray*}

For strictly positive weights, the upper bound in Corollary~\ref{co1} can be estimated by
\begin{eqnarray*}
{\rm wce}(\cP(\bsg^*,f); \infty,d,\bsgamma, \varphi)  & \le & \frac{1}{2^m}\sum_{\emptyset \not=\uu \subseteq [s]} \nu_{\uu} m^{|\uu|}, 
\end{eqnarray*}
where $$\nu_\uu:= \gamma_\uu \, \prod_{i \in \uu} \left[\frac{5}{2} \, (b_i - a_i)\right].$$ 
Then for arbitrary $\delta$ as in the statement of the theorem, the proof follows in the same way as the proof of Theorem~\ref{thm_dignet_prod}.
\end{proof}

Like in Remark~\ref{condweightprodPOD} we can re-write weight condition \eqref{condweightposPoly} for product- and for POD weights.

\begin{remark}\rm
For product weights $\gamma_\uu =\prod_{i \in \uu} \gamma_i$ condition \eqref{condweightposPoly} is equivalent to $$\sum_{i=1}^{\infty} [\gamma_i \, (b_i-a_i)]< \infty.$$
For POD weights condition \eqref{condweightposPoly} is equivalent to $$\sum_{i=1}^{\infty} [\gamma_i \, (b_i-a_i)] \,\max_{v \in [i-1]} \frac{\Gamma_{v+1}}{\Gamma_{v}} < \infty.$$ In particular, for $\Gamma_t=(t!)^\lambda$ with some $\lambda>0$,  condition \eqref{condweightposPoly} is equivalent to $$\sum_{i=1}^{\infty} [\gamma_i \, i^{\lambda} \,  (b_i-a_i)] < \infty.$$
\end{remark}

\vspace{0.5cm}
\noindent{\bf Author's Addresses:}

\noindent Josef Dick, School of Mathematics and Statistics, The University of New South Wales, Sydney, NSW 2052, Australia.  Email: josef.dick@unsw.edu.au \\

\noindent Friedrich Pillichshammer, Institut f\"{u}r Analysis, Universit\"{a}t Linz, Altenbergerstra{\ss}e 69, A-4040 Linz, Austria. Email: friedrich.pillichshammer@jku.at

\end{document}